\numberwithin{equation}{section}
\theoremstyle{plain}
\newtheorem{thm}{Theorem}[section]
\newtheorem{prop}[thm]{Proposition}
\newtheorem{cor}[thm]{Corollary}
\newtheorem{lem}[thm]{Lemma}
\theoremstyle{definition}
\newtheorem{defn}[thm]{Definition}
\newtheorem{exa}[thm]{Example}
\newtheorem*{question*}{Question}
\newtheorem{conj}[thm]{Conjecture}
\newtheorem{rem}[thm]{Remark}
\author{First Author \and Second Author}
\title{On the Cycle Structure of Finite Racks and Quandles}
\newcommand\Author{NAQEEB UR REHMAN}
\let\Title\@title
\begin{document}
\title{\textsc{On the Cycle Structure of Finite Racks and Quandles}}

\author{NAQEEB UR REHMAN}
\date{}
\maketitle



\paragraph{ABSTRACT.}We provide obstructions on the cycle structure of inner automorphisms of finite indecomposable racks and quandles and verify some cases of a conjecture by C. Hayashi.

\begin{center}
	\section{\textsc{Introduction}}
\end{center}
\paragraph{} Racks and quandles are self-distributive algebraic structures whose binary operations are like the conjugation in a group. Finite racks and quandles have been studied in \cite{10} by their inner automorphisms. Based on this perspective, C. Hayashi studied finite indecomposable quandles in \cite{7} and conjectured that any cycle length of an inner automorphism of a finite indecomposable quandle divides the largest cycle length of that inner automorphism. In this paper we will discuss to which extent Hayashi's conjecture is true by providing obstructions on the cycle structure of inner automorphisms of finite indecomposable racks and quandles. The paper is organized as follows. In Section \ref{2.} we review the basic definitions of racks, quandles and their cycle structure. In Section \ref{3.} we provide obstructions on the cycle structure of finite indecomposable racks and quandles. Finally, in Section \ref{4.} we verify some cases of Hayashi's conjecture on the cycle structure of finite indecomposable racks and quandles.

\begin{center}
	\section{\textsc{Racks, Quandles and their Cycle Structure}} \label{2.}
\end{center}

\paragraph{} In this section we recall the basics of racks, quandles and their cycle structure from \cite{1}, \cite{7}, \cite{10}.

\begin{defn}
A \emph{rack} is a pair $(X, \rhd)$, where $X$ is a non-empty set and
$\rhd:X\times X \longrightarrow X$ is a binary operation such that\\
\\[1pt] ${}$ \; (R1) the map $\varphi_x : X \longrightarrow X$, defined by $\varphi_x(y)=x\rhd y$, is bijective for all $x\in X$, and
\\[1pt] ${}$ \; (R2) $x \rhd (y \rhd z) = (x \rhd y) \rhd (x \rhd z)$ for all $x, y, z \in X$ (i.e. $\rhd$ is self-distributive).\\
\\[1pt] ${}$ \; A rack $(X, \rhd)$, or shortly $X$, is called \emph{quandle} if $x \rhd x = x$ for all $x \in X$. A \emph{crossed set} is a quandle $X$ such that $x \rhd y = y$ whenever $y \rhd x = x$ for $x, y \in X$. A \emph{subrack} of a rack $X$ is a non-empty subset $Y\subseteq X$ such that $(Y, \rhd)$ is also a rack.
\\[1pt] ${}$ \; Let $X$ be a rack. From (R1) and (R2) it follows that each $\varphi_x$ is a rack automorphism on $X$ which is called \emph{inner automorphism} of $X$. The \emph{inner group} $Inn(X)$ of a rack $X$ is generated by the inner automorphism $\varphi_x$, where $x\in X$. That is $Inn(X)=\left <\varphi_x |x\in X\right>$. A rack $X$ is called \emph{indecomposable or connected} if $Inn(X)$ acts transitively on $X$. Otherwise, $X$ is called \emph{decomposable}.

\end{defn}

\begin{exa}
A group $G$ is a quandle with $x \rhd y = xyx^{-1}$ for all $x, y \in G$. This quandle is called \emph{conjugation quandle}. Note that any subset $X\subseteq G$ stable under conjugation by $G$ (e.g. a union of conjugacy classes) is a crossed set with $x \rhd y = xyx^{-1}$ for all $x, y \in X$.
\end{exa}

\begin{exa}
Let $(G, +)$ be an abelian group and $Aut(G)$ is the automorphism group of $G$ with identity automorphism $1$. Then for any $\alpha \in Aut(G)$ we have a quandle structure on $G$ given by
\begin{center}
$x \rhd y = (1-\alpha)(x) + \alpha(y)$,
\end{center}
for all $x,y \in G$. This quandle is called the \emph{affine quandle (or Alexander quandle)} associated to the pair ($G, \alpha)$ and is denoted by \emph{Aff}$(G, \alpha)$. Note that for $\alpha = 1$  the affine quandle \emph{Aff}$(G, \alpha)$ is a \emph{trivial quandle} on $G$ with $x \rhd y=y$. For $\alpha = -1$  the affine quandle \emph{Aff}$(G, \alpha)$ is called \emph{kei or Takasaki quandle} with $x \rhd y = 2x-y$. In particular, for $\alpha = -1$ and $G=\mathbb{Z}_n=\{0,1,...,n-1\}$, the affine quandle \emph{Aff}$(G, \alpha)$ is called \emph{dihedral quandle}. Moreover, when $G$ is a finite field $\mathbb{F}_q$, where $q$ is a power of a prime number $p$, then $\alpha$ is defined by any non-zero element of $\mathbb{F}_q$. In this case the affine quandle \emph{Aff}$(G, \alpha)$ is written as \emph{Aff}$(\mathbb{F}_q, \alpha)$ or simply as \emph{Aff}$(q, \alpha)$.

\end{exa}

\begin{exa}
The group $\mathbb{Z}_n$ with $x \rhd y = y+1 \pmod n$ for all $x, y \in \mathbb{Z}_n$  is a rack which is not a quandle. This is a rack of cyclic type with no fixed point.
\end{exa}
Now we recall the definitions about the cycle structure of a finite rack (resp. quandle) from \cite{10}.

\begin{defn}
Let $X$ be a finite rack and $\varphi_x : X \longrightarrow X$ is an inner automorphism of $X$ for $x\in X$. Let $\varphi_x=\sigma_1\sigma_2...\sigma_k$ be the decomposition of $\varphi_x$ into product of disjoint cycles $\sigma_i$ for $1\leq i\leq k$. Let $l_i$ is the cycle length of $\sigma_i$. Then the list (a set with possible repeats) of all $l_i$ is called the \emph{pattern} of $\varphi_x$. The \emph{profile} of $X$ is the sequence of patterns of all $\varphi_x$.
\end{defn}
Note that if a rack $X$ is indecomposable then for all $x, y\in X$ there exists $\varphi\in Inn(X)$ such that $\varphi(x) =y$. Now for all $x, y\in X$ we have
\begin{center}
$(\varphi\circ\varphi_x)(y) = \varphi(x\rhd y)= \varphi(x) \rhd \varphi(y)=y \rhd \varphi(y)= (\phi_y\circ\varphi)(y)$.
\end{center}
 This implies $\phi_y  = \varphi\circ\varphi_x\circ\varphi^{-1}$ and $\varphi_x  = \varphi^{-1}\circ\varphi_x\circ\varphi$, where $\circ$ denotes the composition operation of $Inn(X)$. That is, any two inner automorphisms $\varphi_x, \phi_y$ of an indecomposable rack $X$ are mutually conjugate. Now since the conjugate permutations have same cycle structure, any two inner automorphisms $\varphi_x, \phi_y$ of a finite indecomposable rack $X$ have the same pattern, and therefore, the profile of a finite indecomposable rack is a constant sequence. In this way we can consider the pattern of any inner automorphism of a finite indecomposable rack as the profile of that rack. We write the profile of a finite indecomposable rack $X$ as follows:
\begin{center}
$1^{m_0}l_1^{m_1}l_2^{m_2}...l_k^{m_k}$,
\end{center}
where $l_i$ are the cycle lengths of $\sigma_i$ with $1<l_1<l_2<...<l_k$, and $m_0, m_1,..., m_k$ are the multiplicities of $1, l_1, l_2, ..., l_k$, respectively.
\\[1pt] ${}$ \; Note also that the order of $\varphi_x=\sigma_1\sigma_2...\sigma_k$, denoted by $ord(\varphi_x)$, is equal to the least common multiple ($lcm$) of the cycle lengths $l_i$ of disjoint cycles $\sigma_i$ of $\varphi_x$ for $1\leq i\leq k$. That is $ord(\varphi_x):=lcm(l_1, l_2, ..., l_k)$. The \emph{degree} of a finite indecomposable rack $X$ is the order of $\varphi_ x$, written $ord (\varphi_x)$, for some (equivalently, all) $x\in X $. The \emph{support} of $\varphi_x \in Inn(X)$, written $supp(\varphi_x)$, is the number of moved points by $\varphi_x$.

\begin{center}
	\section{\textsc{Obstructions on the Cycle Structure of Indecomposable Racks and Quandles}} \label{3.}
\end{center}
\paragraph{} In this section we provide obstructions on the profiles of finite indecomposable racks and crossed sets. We begin with the following definition.

\begin{defn}
Let $X$ be a finite indecomposable rack. For any subset $Y$ of $X$, the subrack of $X$ generated by $Y$
is the smallest subrack of $X$ containing $Y$.
\end{defn}

\begin{lem} \label{3.2.}
Let $X$ be a finite indecomposable rack. Let $Y$ be a subrack of $X$ with $Y\neq X$ and $Y^c=X \setminus Y$. Then $X$ is generated by $Y^c$.
\end{lem}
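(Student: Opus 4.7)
The plan is to let $Z$ denote the subrack of $X$ generated by $Y^c$ and to show $Z = X$ by exploiting indecomposability. Since $Y^c \subseteq Z$, taking complements gives $Z^c \subseteq Y$. It will suffice to show that every inner automorphism $\varphi_x$ of $X$ preserves $Z$ setwise: then $Z$ is stable under all of $Inn(X)$, and because $Inn(X)$ acts transitively on $X$ (with $Z$ non-empty, as $Y \neq X$), we must have $Z = X$.

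To verify that each $\varphi_x$ preserves $Z$, I would split into two cases. When $x \in Z$, the conclusion is immediate: since $Z$ is a finite subrack containing $x$, the map $\varphi_x$ sends $Z$ into $Z$ by closure under $\rhd$, and then injectivity of $\varphi_x$ on $X$ combined with the finiteness of $Z$ forces $\varphi_x(Z) = Z$. The substantive case is $x \in Z^c$. Here $x$ lies in $Y$, so the same subrack-plus-finiteness argument applied to $Y$ shows that $\varphi_x$ preserves $Y$ setwise; since $\varphi_x$ is a bijection of $X$, it also preserves $Y^c$ setwise. Hence $\varphi_x(Y^c) = Y^c \subseteq Z$.

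To upgrade this statement about the generators $Y^c$ to the whole of $Z$, I would invoke self-distributivity (R2): if $z_1, z_2 \in Z$ with $\varphi_x(z_1), \varphi_x(z_2) \in Z$, then
\[
\varphi_x(z_1 \rhd z_2) = x \rhd (z_1 \rhd z_2) = (x \rhd z_1) \rhd (x \rhd z_2) = \varphi_x(z_1) \rhd \varphi_x(z_2) \in Z,
\]
since $Z$ is closed under $\rhd$. Writing $Z$ as the ascending union $Z = \bigcup_{n \geq 0} Z_n$ with $Z_0 = Y^c$ and $Z_{n+1} = Z_n \cup \{a \rhd b : a,b \in Z_n\}$, a straightforward induction on $n$ gives $\varphi_x(Z) \subseteq Z$, and then injectivity together with $|Z| < \infty$ promotes this to $\varphi_x(Z) = Z$.

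The main obstacle is really just isolating the right inductive description of $\langle Y^c \rangle$; once the two cases $x \in Z$ and $x \in Z^c \subseteq Y$ are handled as above, everything else is bookkeeping, and indecomposability delivers $Z = X$ at the end.
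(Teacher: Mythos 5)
Your proposal is correct and follows essentially the same route as the paper: both arguments show that the set generated by $Y^c$ is stable under every inner automorphism $\varphi_x$ (splitting into $x$ in the generated set versus $x\in Y$, using $\varphi_x(Y^c)=Y^c$ and self-distributivity to propagate invariance through the generation process) and then invoke transitivity of $Inn(X)$ to conclude the set is all of $X$. The only cosmetic difference is that the paper works with an explicit set of left-nested products while you use the filtration $Z=\bigcup_n Z_n$ of the generated subrack.
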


\begin{proof}
 Since $Y$ is a subrack of $X$, we conclude that $Y\rhd Y^c=Y^c$. Let
  \begin{center}
  $Z=\{y_1\rhd(y_2\rhd ...\rhd(y_{n-1}\rhd y_n) )\mid n\geq 1, y_1,...,y_n \in Y^{c}\}$.
  \end{center}
Then $y\rhd z\in Z$ for all $y \in Y^{c}, z\in Z$ by definition, and $y\rhd z\in Z$ for all $y \in Y$ by the self-distributivity of $\rhd$ and the $Y$-invariance of $Y^{c}$. Hence $Z$ is a non-empty $X$-invariant subset of $X$, and therefore equal to $X$ since $X$ is indecomposable.
\end{proof}

\begin{lem} \label{3.3.}
Let $X$ be a finite indecomposable rack such that $X=Y\cup Z$, for two subracks $Y$ and $Z$ of $X$. Then $X=Y$ or $X=Z$.
\end{lem}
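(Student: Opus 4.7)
The plan is to reduce to Lemma \ref{3.2.} via a short contrapositive-style argument. Suppose $X\neq Y$; I will show this forces $X=Z$, which establishes the dichotomy.

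First I would set $Y^c=X\setminus Y$, which is non-empty by the assumption $X\neq Y$. Since $X=Y\cup Z$, every element not in $Y$ must lie in $Z$, so $Y^c\subseteq Z$. Now by Lemma \ref{3.2.}, applied to the proper subrack $Y$ of the finite indecomposable rack $X$, the rack $X$ is generated (as a subrack) by $Y^c$.

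Next I would observe that $Z$ is itself a subrack of $X$ containing $Y^c$. By Definition \ref{3.1.} (the smallest subrack containing $Y^c$), the subrack of $X$ generated by $Y^c$ is contained in $Z$. Combining with the previous step yields $X\subseteq Z$, hence $X=Z$, as desired.

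I do not expect a real obstacle here: the proof is essentially a one-line consequence of Lemma \ref{3.2.} together with the definition of the subrack generated by a subset. The only point to double-check is that $Y^c\subseteq Z$ genuinely follows from $X=Y\cup Z$ (which is immediate) and that $Z$ being a subrack ensures closure under $\rhd$, so that every iterated expression $y_1\rhd(y_2\rhd\cdots\rhd(y_{n-1}\rhd y_n))$ with $y_i\in Y^c\subseteq Z$ remains in $Z$. Both points are routine, so no extra structural hypothesis (such as the crossed set condition) is needed.
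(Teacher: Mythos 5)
Your proof is correct and follows exactly the paper's argument: assume $X\neq Y$, apply Lemma \ref{3.2.} to get that $X$ is generated by $Y^c\subseteq Z$, and conclude $X=Z$ since $Z$ is a subrack containing $Y^c$. The only cosmetic difference is that you spell out the ``smallest subrack containing $Y^c$'' step, which the paper leaves implicit.
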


\begin{proof}
Assume that $X\neq Y$. Then by Lemma \ref{3.2.} $X$ is generated by $Y^{c}\subseteq Z$. Since $Z$ is a subrack of $X$, one concludes that $X=Z$.
\end{proof}

 \begin{cor} \label{3.4.}
 Let $p, q \in \mathbb{Z}$ with $p,q \geq  2$. Let $X$ be an indecomposable rack and $x\in X$,
   \begin{center}
   $Y= \{y\in X \mid \varphi^p_x(y)=y \}, Z= \{z\in X \mid \varphi^q_x(z)=z \}$.
   \end{center}
 Assume that $X=Y\cup Z$. Then $X=Y$ or $X=Z$.
  \end{cor}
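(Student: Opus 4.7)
The natural approach is to reduce the statement to Lemma \ref{3.3.}: if $Y$ and $Z$ are both subracks of $X$ with $X = Y \cup Z$, then $X = Y$ or $X = Z$. So the entire task reduces to verifying that $Y$ and $Z$ are in fact subracks (modulo the trivial case in which one of them is empty, where the equality $X = Y \cup Z$ forces $X$ to equal the other).

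The key observation is the conjugation identity hidden in axiom (R2). Rewriting self-distributivity $x \rhd (y \rhd z) = (x \rhd y) \rhd (x \rhd z)$ as an equation of inner automorphisms yields $\varphi_x \circ \varphi_y = \varphi_{\varphi_x(y)} \circ \varphi_x$, that is, $\varphi_x \varphi_y \varphi_x^{-1} = \varphi_{\varphi_x(y)}$. Iterating this $p$ times gives
\[
\varphi_x^p \, \varphi_y \, \varphi_x^{-p} \;=\; \varphi_{\varphi_x^p(y)}.
\]
For $y \in Y$ the right-hand side is simply $\varphi_y$, so $\varphi_x^p$ commutes with $\varphi_y$. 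Consequently, for any $y_1, y_2 \in Y$,
\[
\varphi_x^p(y_1 \rhd y_2) \;=\; \varphi_x^p \varphi_{y_1}(y_2) \;=\; \varphi_{y_1}\varphi_x^p(y_2) \;=\; \varphi_{y_1}(y_2) \;=\; y_1 \rhd y_2,
\]
so $y_1 \rhd y_2 \in Y$. Hence $Y$ is closed under $\rhd$; since $X$ is finite, closure together with the injectivity of each $\varphi_{y}$ forces $\varphi_y|_Y$ to be bijective on $Y$, so $Y$ is a subrack of $X$ as soon as it is non-empty. Replacing $p$ by $q$ gives the same conclusion for $Z$.

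With $Y$ and $Z$ both realised as subracks whose union is $X$, Lemma \ref{3.3.} immediately yields $X = Y$ or $X = Z$. I do not expect a genuine obstacle here: the only point that requires care is noticing that fixed-point sets of powers of $\varphi_x$ are $\rhd$-closed, which falls out cleanly from the commutation identity above; the case where $Y$ (or $Z$) is empty is handled directly by $X = Y \cup Z$.
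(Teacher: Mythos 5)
Your proposal is correct and follows the same route as the paper: show that the fixed-point sets $Y$ and $Z$ are subracks (the paper attributes this to self-distributivity, which you verify in detail via the conjugation identity $\varphi_x^p\varphi_y\varphi_x^{-p}=\varphi_{\varphi_x^p(y)}$; equivalently one can just note $\varphi_x^p(y_1\rhd y_2)=\varphi_x^p(y_1)\rhd\varphi_x^p(y_2)$ since $\varphi_x^p$ is a rack automorphism) and then invoke Lemma \ref{3.3.}. Your extra care about the empty case is a reasonable, harmless addition.
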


\begin{proof}  By the self-distributivity of $\rhd$, the sets $Y$ and $Z$ are subracks of $X$. Then the claim follows from Lemma \ref{3.3.}.
\end{proof}

\paragraph{} Corollary \ref{3.4.} is useful to provide the following obstruction on the profile of a finite indecomposable rack.

\begin{prop} \label{3.5.}
There is no finite indecomposable rack $X$ of profile $1^{m_0}l_1^{m_1}l_2^{m_2}...l_k^{m_k}$ such that $lcm(l_1,l_2,...,l_i)$ and $lcm(l_{i+1},l_{i+2},...,l_k)$ do not divide each other for $1\leq i<k$.
\end{prop}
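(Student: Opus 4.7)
The plan is to argue by contradiction and apply Corollary \ref{3.4.} directly. Suppose such an indecomposable rack $X$ exists, fix $x \in X$, and split the cycle lengths at some index $1 \leq i < k$. Set
\[
p = \mathrm{lcm}(l_1, l_2, \ldots, l_i), \qquad q = \mathrm{lcm}(l_{i+1}, l_{i+2}, \ldots, l_k).
\]
Since $l_1 \geq 2$ and $l_{i+1} \geq 2$, both $p, q \geq 2$, so the hypotheses of Corollary \ref{3.4.} apply to the subsets
\[
Y = \{y \in X \mid \varphi_x^p(y) = y\}, \qquad Z = \{z \in X \mid \varphi_x^q(z) = z\}.
\]

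The first key step is to show $X = Y \cup Z$. Every point of $X$ lies in some disjoint cycle of $\varphi_x$, whose length is either $1$ or one of the $l_j$. If the length divides $p$ (in particular for $j \leq i$ and for all fixed points), the point is in $Y$; if it divides $q$ (in particular for $j \geq i+1$), the point is in $Z$. Every cycle length of $\varphi_x$ falls into one of these two categories, so $Y \cup Z = X$.

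By Corollary \ref{3.4.}, we must have $X = Y$ or $X = Z$. In the former case $\varphi_x^p = \mathrm{id}$, so $\mathrm{ord}(\varphi_x) = \mathrm{lcm}(l_1, \ldots, l_k)$ divides $p$; but then $q = \mathrm{lcm}(l_{i+1}, \ldots, l_k)$ divides $p$, contradicting the assumption that $p$ and $q$ do not divide each other. The latter case is symmetric and forces $p \mid q$, again a contradiction.

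There is no real technical obstacle here: the content is entirely in recognizing that the partition of cycle lengths by the index $i$ matches the partition of $X$ into $Y$ and $Z$, so that Corollary \ref{3.4.} applies. The mild point to be careful about is verifying $p, q \geq 2$ so that Corollary \ref{3.4.} is usable, and noting that the fixed points $1^{m_0}$ are harmless because they belong to both $Y$ and $Z$.
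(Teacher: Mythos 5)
Your proposal is correct and follows essentially the same route as the paper: define $p$ and $q$ as the two lcm's, observe that $Y$ and $Z$ cover $X$, and derive a contradiction from Corollary \ref{3.4.}. You simply spell out more explicitly why $X=Y\cup Z$ and why $X=Y$ (or $X=Z$) forces $q\mid p$ (or $p\mid q$), which the paper compresses into ``by definition of $p$ and $q$.''
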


\begin{proof} Suppose there exists a finite indecomposable rack $X$ with given profile. Let $p=lcm(l_1,l_2,...,l_i)$, and $q=lcm(l_{i+1},l_{i+2},...,l_k)$. Then $p,q\geq 2$ and $p,q$ do not divide each other. Let $x\in X$, and
 \begin{center}
 $Y= \{y\in X \mid \varphi^p_x(y)=y \}, Z= \{z\in X \mid \varphi^q_x(z)=z \}$.
 \end{center}
By the self-distributivity of $\rhd$, the sets $Y, Z$ are subracks of $X$ and $X=Y\cup Z$. However, by definition of $p$ and $q$, we have $X\neq Y$ and $X\neq Z$, a contradiction to Corollary \ref{3.4.}.
\end{proof}

\begin{rem}\label{3.6.}
By Proposition \ref{3.5.} there is no finite indecomposable rack $X$ of profile $1^{m_0}l_1^{m_1}l_2^{m_2}$ such that $l_1\nmid l_2$ (that is, $l_1$ does not divide $l_2$). Next consider the profile $1^{m_0}l_1^{m_1}l_2^{m_2}l_3^{m_3}$. Then we have two cases to consider, namely, when $l_k \nmid lcm(l_{k+1}, l_{k+2})\pmod 3$, and $l_k \mid lcm(l_{k+1}, l_{k+2})\pmod 3$ for $k\in \{1, 2, 3\}$. The case when $l_k \nmid lcm(l_{k+1}, l_{k+2})\pmod 3$ is excluded by Proposition \ref{3.5.}. However, the case when $l_k \mid lcm(l_{k+1}, l_{k+2})\pmod 3$ can not be excluded by Proposition \ref{3.5.}. For example the profile $1^{m_0}l_1l_2l_3$, with $(l_1, l_2, l_3)=(pq,pr,qr)$ for pairwise distinct primes $p,q,r$, can not be excluded by Proposition \ref{3.5.}. In the next section we consider such cases in details.

\end{rem}

\subsection{\text{Obstruction on the the Profile of Finite Indecomposable Crossed Sets}}
\paragraph{}  Let $X$ be a finite indecomposable crossed set with profile $1^{m_0}l_1^{m_1}l_2^{m_2}l_3^{m_3}$. Let $1< l_1< l_2 < l_3$, $l_1 \nmid l_2, \; l_1, l_2 \nmid l_3$ and $l_k \mid lcm(l_{k+1}, l_{k+2})\pmod 3$ for $k\in \{1, 2, 3\}$. Let $m_0, m_1,m_2,m_3 \in \mathbb{N}$ and let $p_1,p_2,...,p_r$ be pairwise distinct primes for positive integer $r$ such that
\begin{center}
$l_1=\prod\limits_{i=1}^{r} p_{i}^{a_i}$, $l_2=\prod\limits_{i=1}^{r} p_{i}^{b_i}$ and $l_3=\prod\limits_{i=1}^{r} p_{i}^{c_i}$,
\end{center}
where $a_i, b_i$ and $c_i$ are non-negative integers for all $1\leq i\leq r$. Let $A=\{ p_i| c_i=b_i>a_i\}$, $B=\{ p_i| a_i=c_i>b_i\}$, $C=\{ p_i| a_i=b_i>c_i\}$ and $D=\{ p_i| a_i=b_i=c_i\}$, then $\{p_1, p_2, ..., p_r\}=A\cup B\cup C\cup D$. Also let
 \begin{center}
 $p:=\prod\limits_{p_i\in C}p_i^{a_i}=\prod\limits_{p_i\in C}p_i^{b_i}$, \\$q:=\prod\limits_{p_i\in B}p_i^{a_i}=\prod\limits_{p_i\in B}p_i^{c_i}$, \\$r:=\prod\limits_{p_i\in A}p_i^{b_i}=\prod\limits_{p_i\in A}p_i^{c_i}$, \\$s:=\prod\limits_{p_i\in D}p_i^{a_i}=\prod\limits_{p_i\in D}p_i^{b_i}=\prod\limits_{p_i\in D}p_i^{c_i}$
 \end{center}
Then $p,q,r>1$ and $p,q,r,s$ are pairwise coprime integers. Let
 \begin{center}
   $p^\prime:=\prod\limits_{p_i\in C}p_i^{c_i}$, $q^\prime:=\prod\limits_{p_i\in B}p_i^{b_i}$, $r^\prime:=\prod\limits_{p_i\in A}p_i^{a_i}$,
 \end{center}
are such that , $p^\prime \mid p, \; q^\prime \mid q$ and $r^\prime \mid r$. Then we have
\begin{center}
  $l_1= \prod\limits_{p_i\in C}p_i^{a_i}\prod\limits_{p_i\in B}p_i^{a_i}\prod\limits_{p_i\in A}p_i^{a_i}\prod\limits_{p_i\in D}p_i^{a_i}=p qr^\prime s$,
\\$l_2=\prod\limits_{p_i\in C}p_i^{b_i}\prod\limits_{p_i\in B}p_i^{b_i}\prod\limits_{p_i\in A}p_i^{b_i}\prod\limits_{p_i\in D}p_i^{b_i}=p q^\prime rs$,
\\$l_3= \prod\limits_{p_i\in C}p_i^{c_i}\prod\limits_{p_i\in B}p_i^{c_i}\prod\limits_{p_i\in A}p_i^{c_i}\prod\limits_{p_i\in D}p_i^{c_i}=p^\prime qrs$.
\end{center}

\paragraph{} Note that if $C=\emptyset$, then $p=1=p^\prime$ and $l_1=qr^\prime s, l_2=q^\prime rs, l_3=qrs$. Since $r^\prime \mid r$ and $q^\prime \mid q$, $l_1 \mid l_3$ and $l_2 \mid l_3$. Therefore we assume that $A\neq\emptyset$, $B\neq\emptyset$ and $C\neq\emptyset$. Note also that if $s\neq1$ then the number of moved points is equal to \{the number of moved points for $s=1$\}. $s$. If we fix $p,q,r$, then there are finitely many choices for $p^\prime, q^\prime, r^\prime, s$. We consider the case when $(p^\prime, q^\prime, r^\prime)=(1,1,1)$. In this case we have $(l_1,l_2,l_3)=(pqs,prs,qrs)$. For example, for $(p, q, r)=(2,3,5)$ and $(p^\prime, q^\prime, r^\prime,s )=(1,1,1,1)$ we have $(l_1,l_2,l_3)=(6,10,15)$, and for $(p^\prime, q^\prime, r^\prime)=(1,1,1)$ and $(p, q, r, s)=(4,3,5,2)$ we have $(l_1,l_2,l_3)=(12,15,20)$.
\\[1pt] ${}$ \; Since $lcm(l_1, l_2)=\prod\limits_{i=1}^{r} p_{i}^{max(a_i, b_i)}$, therefore for all $1\leq i\leq r$, $l_3 \mid lcm(l_1, l_2)  \Leftrightarrow c_i\leq \max{\{a_i, b_i\}}$. Note that if there exist some $i$ such that $a_i\leq b_i< c_i$, then $c_i\nleq \max{\{a_i, b_i\}}$. This implies that $l_3 \nmid lcm(l_1, l_2)$ and there exists no such $X$ by Proposition \ref{3.5.}. Therefore we assume that there does not exist $i$ such that $a_i\leq b_i< c_i$. We show that these and additional assumptions lead to a contradiction when $l_3 \mid lcm(l_1, l_2)$ and $m_1=m_2=m_3=1$. The proof will be similar when $l_1 \mid lcm(l_2, l_3)$ or $l_2 \mid lcm(l_1, l_3)$ and $m_1=m_2=m_3=1$.
\\[1pt] ${}$ \; Assume that $X$ is a finite indecomposable crossed set with profile $1^{m_0}l_1l_2l_3$, where $l_1,l_2,l_3$ (defined as above) are such that $l_k \mid lcm(l_{k+1}, l_{k+2})\pmod 3$ for positive integer $k$. Let $x\in X$ and $t\geq 1$ be an integer. Then we define the set
\begin{center}
$X_t:= \{y\in X \mid \varphi^t_x(y)=y \}$.
\end{center}
Let $X^\prime_t=X_t\setminus X_1$ for all $t>1$. Then $X$ is the disjoint union of
non-empty sets $X_1, X^\prime_{pqs},X^\prime_{prs},X^\prime_{qrs}$. Now, for the obstruction on the profile $1^{m_0}l_1l_2l_3$ (defined as above) of a finite indecomposable crossed set $X$, we prove the following lemmas.

\begin{lem}\label{3.7.}
The set $X_t$ is a subrack of $X$ for all $t\geq 1$. In particular, $y\rhd (X\setminus X_t)=X\setminus X_t$ for all $y \in X_t$.
\end{lem}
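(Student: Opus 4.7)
My plan is to prove the two assertions in order; both reduce to elementary consequences of self-distributivity combined with finiteness of $X$.

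For the first assertion, I will begin by showing that $X_t$ is closed under $\rhd$. The key observation is that self-distributivity (R2) says precisely that each inner automorphism $\varphi_x$ is a rack homomorphism, giving $\varphi_x(a \rhd b) = \varphi_x(a) \rhd \varphi_x(b)$. A short induction on $t$ then upgrades this to $\varphi_x^t(a \rhd b) = \varphi_x^t(a) \rhd \varphi_x^t(b)$ for every $t \geq 1$. For $a, b \in X_t$, the right-hand side equals $a \rhd b$, so $a \rhd b \in X_t$.

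Once closure is in hand, I just need to check (R1) for $(X_t, \rhd)$: for each $y \in X_t$, the restriction $\varphi_y|_{X_t}$ is a self-map of the finite set $X_t$ which inherits injectivity from $\varphi_y$ on $X$, hence is a bijection. Axiom (R2) is inherited trivially, so $X_t$ is a subrack. The ``in particular'' clause is then immediate: since $\varphi_y$ is a bijection on the finite set $X$ that restricts to a bijection $X_t \to X_t$, it must also restrict to a bijection on the complement, so $y \rhd (X \setminus X_t) = \varphi_y(X \setminus X_t) = X \setminus X_t$.

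There is no real obstacle in this argument; the only point requiring minor care is to make the induction on $t$ explicit enough that one sees why the hypothesis $a, b \in X_t$ (rather than one of them alone) is what forces $a \rhd b$ back into $X_t$, and to note that finiteness of $X$ is genuinely used when turning injectivity of $\varphi_y|_{X_t}$ into surjectivity.
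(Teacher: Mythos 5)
Your argument is essentially the paper's: closure of $X_t$ follows from $\varphi_x^t(a\rhd b)=\varphi_x^t(a)\rhd\varphi_x^t(b)$, and the complement statement from the fact that a bijection of $X$ restricting to a bijection of $X_t$ also restricts to a bijection of $X\setminus X_t$. Those steps are correct, and your extra care about (R1) on the restriction (injectivity plus finiteness gives surjectivity) is a reasonable elaboration of what the paper leaves implicit.

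The one omission is non-emptiness. The paper's definition of a subrack requires the subset to be non-empty, and for an arbitrary rack $X_t$ can indeed be empty: in the cyclic-type rack of Example 2.4 one has $X_1=\emptyset$. The paper's proof opens precisely with this point, invoking the crossed-set (quandle) hypothesis to get $x\rhd x=x$, hence $\varphi_x^t(x)=x$ and $x\in X_t$ for every $t\geq 1$. Your proof never verifies $X_t\neq\emptyset$ and never uses the standing assumption that $X$ is a crossed set, so as written it does not establish that $X_t$ is a subrack in the paper's sense. This is a one-line fix, but it is the only place in the lemma where the crossed-set hypothesis enters, so it should not be skipped.
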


\begin{proof}
Since $X$ is a crossed set, $x\in X_t$ and therefore $X_t \neq \emptyset$. Let $y_1,y_2 \in X_t$. Then $\varphi^t_1(y_1\rhd y_2)=\varphi^t_1(y_1)\rhd \varphi^t_1(y_2)=y_1\rhd y_2$. This implies that $X_t$ is a subrack of $X$. For all $y \in X_t$,  $y\rhd (X\setminus X_t)=y\rhd X\setminus y\rhd X_t=\varphi_y(X)\setminus \varphi_y(X_t)=X\setminus X_t$, since $\varphi_y$ is a bijection on $X$ and also on $X_t$ for all $y \in X_t$.
\end{proof}

\begin{lem} \label{3.8.}
For all $y\in X^\prime_{pqs}$ there exists  $z\in X^\prime_{prs}$ such that $y\rhd z \neq z$.
\end{lem}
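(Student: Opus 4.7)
The plan is to argue by contradiction, exhibiting $X$ as a union of two proper subracks and invoking Lemma~\ref{3.3.}. Suppose some $y_0 \in X^\prime_{pqs}$ satisfies $y_0 \rhd z = z$ for every $z \in X^\prime_{prs}$.

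The main step is to upgrade this hypothesis from $y_0$ to every element of $X^\prime_{pqs}$. Set
\[ U = \{y \in X^\prime_{pqs} \mid y \rhd z = z \text{ for all } z \in X^\prime_{prs}\}. \]
Since $\varphi_x$ restricts to a bijection on $X^\prime_{prs}$ (it is the single $l_2$-cycle there), any $z^\prime \in X^\prime_{prs}$ has the form $\varphi_x(z)$ with $z \in X^\prime_{prs}$, and for $y \in U$ the self-distributivity axiom gives
\[ \varphi_x(y) \rhd z^\prime \;=\; (x \rhd y)\rhd(x \rhd z) \;=\; x \rhd (y \rhd z) \;=\; x \rhd z \;=\; z^\prime. \]
Hence $\varphi_x(U) \subseteq U$. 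Because $\varphi_x$ acts on $X^\prime_{pqs}$ as a single cycle of length $pqs$ and $y_0 \in U$, this forces $U = X^\prime_{pqs}$.

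Invoking the crossed set hypothesis, the symmetric relation $z \rhd y = y$ then holds for all $y \in X^\prime_{pqs}$ and $z \in X^\prime_{prs}$. With this in hand, I would check that $S := X_{pqs} \cup X_{prs} = X_1 \cup X^\prime_{pqs} \cup X^\prime_{prs}$ is a subrack of $X$: the only genuinely new cases are $u \rhd v$ with $u \in X^\prime_{pqs}, v \in X^\prime_{prs}$, or with the roles reversed, and in both the product collapses to the second operand and so lies in $S$; all remaining cases reduce to the subrack property of $X_{pqs}$ and $X_{prs}$ given by Lemma~\ref{3.7.}.

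Finally, $T := X_{qrs}$ is a subrack by Lemma~\ref{3.7.}, and $X = S \cup T$, while $S$ omits $X^\prime_{qrs}$ and $T$ omits $X^\prime_{pqs}$, so neither equals $X$. This contradicts Lemma~\ref{3.3.}. The decisive step, and the one I expect to be the main obstacle, is establishing $U = X^\prime_{pqs}$: without propagating the local fixing hypothesis across the whole $l_1$-cycle, the sets $S$ and $T$ would fail to cover $X$ and the reduction to Lemma~\ref{3.3.} would not go through.
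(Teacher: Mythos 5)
Your proposal is correct and follows essentially the same route as the paper: assume the contrary, propagate the fixing relation from one $y_0$ to all of $X^\prime_{pqs}$ using the action of $\varphi_x$, invoke the crossed set axiom to get the symmetric relation, and conclude that $X_{pqs}\cup X_{prs}$ and $X_{qrs}$ are two proper subracks covering $X$, contradicting Lemma~\ref{3.3.}. The only difference is that you spell out the propagation step (the paper compresses it into the phrase ``since $X$ is a crossed set and $\varphi_x$ acts transitively on $X^\prime_{pqs}$''), and your explicit self-distributivity computation is exactly the right way to justify it.
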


\begin{proof}
Suppose $y\rhd z = z$ for all $z\in X^\prime_{prs}$, then $y^\prime\rhd z = z$ and $z\rhd y^\prime = y^\prime$ for all $z\in X^\prime_{prs}$ and $y^\prime \in X^\prime_{pqs}$, since $X$ is a crossed set and $\varphi_x$ acts transitively on 
$X^\prime_{pqs}$. Then $Y=X_{pqs}\cup X_{prs}$ is a subrack of $X$ with $Y \neq X$ and $Y\cup X_{qrs}=X$. This is a contradiction to Lemma \ref{3.3.}. Hence $y\rhd z \neq z$.
\end{proof}

\begin{lem} \label{3.9.}
Let $y\in X^\prime_{pqs}$ and $z \in X\setminus X_{pqs}$ with $y\rhd z\neq z$. Let $t$ be the smallest positive integer with $\varphi^t_y(z)=z$. Then $t=prs$ or $t=qrs$.
\end{lem}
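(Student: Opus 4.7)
The plan is to argue by contradiction, ruling out $t = pqs$. Since $X$ is indecomposable, $\varphi_y$ is conjugate to $\varphi_x$ in $Inn(X)$ and thus shares the cycle type $1^{m_0}(pqs)(prs)(qrs)$; combined with $y \rhd z \neq z$ this forces $t \in \{pqs, prs, qrs\}$. Under the assumption $t = pqs$, $z$ lies in the unique $\varphi_y$-cycle of length $pqs$, which I denote $Y'_{pqs}$.

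The heart of the argument is to describe the $\varphi_x^{pqs}$-orbit of $z$ in two different ways and compare its size. First I would exploit a commutation relation. Because $y \in X'_{pqs}$, $\varphi_x^{pqs}$ fixes $y$, and the standard identity $\varphi \varphi_u \varphi^{-1} = \varphi_{\varphi(u)}$ (valid for any rack automorphism $\varphi$ and any $u \in X$) then yields $\varphi_x^{pqs} \varphi_y = \varphi_y \varphi_x^{pqs}$. Since each non-trivial cycle length of $\varphi_y$ occurs with multiplicity one (the hypothesis $m_1 = m_2 = m_3 = 1$), this commutation forces $\varphi_x^{pqs}$ to stabilize each non-trivial $\varphi_y$-cycle as a set, in particular $Y'_{pqs}$. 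The restriction $\varphi_x^{pqs}|_{Y'_{pqs}}$ then lies in the centralizer of the full $pqs$-cycle $\varphi_y|_{Y'_{pqs}}$ inside the symmetric group on $Y'_{pqs}$, and this centralizer is generated by the cycle itself; hence $\varphi_x^{pqs}|_{Y'_{pqs}}$ is a power of $\varphi_y|_{Y'_{pqs}}$ and all its cycle lengths divide $pqs$. In particular the $\varphi_x^{pqs}$-orbit of $z$ has size dividing $pqs$.

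On the other hand $z \in X \setminus X_{pqs}$ must lie in the unique $\varphi_x$-cycle of length $prs$ or of length $qrs$; since $\varphi_x^{pqs}$ preserves those cycles as well, I would compute that the orbit of $z$ under $\varphi_x^{pqs}$ has size $prs/\gcd(prs, pqs) = r$ or $qrs/\gcd(qrs, pqs) = r$, both equal to $r$. The two descriptions of the same orbit must agree, forcing $r \mid pqs$; but $p, q, r, s$ were constructed to be pairwise coprime and $r > 1$, so $\gcd(r, pqs) = 1$ — contradiction.

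The only non-routine ingredient is the combination of the commutation $\varphi_x^{pqs} \varphi_y = \varphi_y \varphi_x^{pqs}$ with the multiplicity-one hypothesis, which is what forces $\varphi_x^{pqs}$ to stabilize $Y'_{pqs}$ and then to act on it as a power of $\varphi_y$; everything after that is mechanical $\gcd$-arithmetic with the pairwise coprime factors $p, q, r, s$. One alternative I would keep in mind is a direct subrack argument in the style of Lemma \ref{3.3.} and Corollary \ref{3.4.}, applied to $W_y^{pqs} = \{w \in X : \varphi_y^{pqs}(w)=w\}$ together with a suitable $\varphi_x$-based subrack; but the two choices I tried do not cover $X$, so the centralizer route above is my preferred approach.
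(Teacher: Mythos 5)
Your argument is correct and rests on the same key step as the paper's proof: the commutation $\varphi_x^{pqs}\varphi_y=\varphi_y\varphi_x^{pqs}$ (from $\varphi_x^{pqs}(y)=y$) combined with the fact that every $\varphi_x^{pqs}$-orbit in $X\setminus X_{pqs}$ has size $r$, which is coprime to $pqs$. The only difference is in the finishing move: the paper counts the total number of points of $X\setminus X_{pqs}$ moved by $\varphi_y$ as a multiple of that orbit size, while you track the single orbit of $z$ via the centralizer of a full $pqs$-cycle; both close with the same coprimality contradiction, and your orbit size $r$ is in fact the correct value where the paper writes $rs$.
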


\begin{proof}
Since $X$ is indecomposable, $\varphi_y$ is the product of a $pqs-$, a $prs-$, and a $qrs-$ cycle. If $y\rhd z^\prime\neq z^\prime$ for some $z^\prime \in X\setminus X_{pqs}$, then all entries of the cycle of $\varphi_y$ containing $z^\prime$ are in $X\setminus X_{pqs}$, since $X\setminus X_{pqs}$ is invariant under $X_{pqs}$ by Lemma \ref{3.7.}.
\\[1pt] ${}$ \; Since $y \in X_{pqs}$, and $X_{pqs}$ is a subrack of $X$ therefore we have $\varphi_x^{pqs}\varphi_y=\varphi_y\varphi_x^{pqs}$. Hence, $z \in supp(\sigma_i)$ with $\varphi_y^t(z)= z$ imply that $y\rhd u \neq u$ and $\varphi_y^t(u)= u$ for $u=\varphi_x^{pqs}(z)$. 
Since all $\varphi_x^{pqs}$-orbits of $X^\prime_{prs}$ and of $X^\prime_{pqs}$ have $rs$ elements,
the number of elements of $X\setminus X_{pqs}$ moved by $\varphi_y$ is a multiple of $rs$. Therefore the elements of $X\setminus X_{pqs}$ moved by $\varphi_y$ are contained in the $prs$- and the $qrs$-cycles of $\varphi_y$.
\end{proof}

\begin{lem} \label{3.10.}
Let $y\in X^\prime_{pqs}$. Then there exist $z \in X^\prime_{prs},f \in X^\prime_{qrs}$ such that $ z\rhd f=y$ or $f\rhd z=y$.
\end{lem}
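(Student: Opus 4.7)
The plan is to consider the set
\[
S := \{\, z \rhd f \mid z \in X'_{prs},\; f \in X'_{qrs}\,\} \;\cup\; \{\, f \rhd z \mid z \in X'_{prs},\; f \in X'_{qrs}\,\}
\]
and prove the stronger statement $X'_{pqs} \subseteq S$. This reduces the lemma to two assertions: (a) $S \cap X'_{pqs} \neq \emptyset$, and (b) $S \cap X'_{pqs}$ is stable under $\varphi_x$. Because $m_1 = m_2 = m_3 = 1$, the set $X'_{pqs}$ is a single $pqs$-cycle of $\varphi_x$, so the only $\varphi_x$-invariant subsets of $X'_{pqs}$ are $\emptyset$ and $X'_{pqs}$ itself; together with (a) and (b) this forces $S \cap X'_{pqs} = X'_{pqs}$.

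Step (b) is almost immediate: $X'_{prs}$ and $X'_{qrs}$ are themselves $\varphi_x$-orbits, and self-distributivity gives $\varphi_x(z \rhd f) = \varphi_x(z) \rhd \varphi_x(f)$ and similarly for $f \rhd z$, so $\varphi_x(S) \subseteq S$. Intersecting with the $\varphi_x$-invariant set $X'_{pqs}$ preserves $\varphi_x$-stability.

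Step (a) is the crux. I would argue that $Y := X_{prs} \cup X_{qrs}$ is not a subrack of $X$. Both $X_{prs}$ and $X_{qrs}$ are subracks by Lemma \ref{3.7.}, and $Y \neq X$ since $X'_{pqs}$ is disjoint from $X_{prs}$ and $X_{qrs}$ (an element of $X'_{pqs}$ has $\varphi_x$-cycle length $pqs$, which divides neither $prs$ nor $qrs$, as $p,q,r$ are pairwise coprime and each exceeds $1$). If $Y$ were a subrack, then by Lemma \ref{3.2.} the set $X$ would be generated as a subrack by $Y^c = X'_{pqs} \subseteq X_{pqs}$; but $X_{pqs}$ is itself a subrack by Lemma \ref{3.7.}, so the subrack generated by $X'_{pqs}$ lies in $X_{pqs}$, forcing $X = X_{pqs}$ and contradicting $X'_{prs} \neq \emptyset$. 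Hence $Y$ is not a subrack, so there exist $a,b \in Y$ with $a \rhd b \notin Y$, i.e.\ $a \rhd b \in X'_{pqs}$. A short case analysis, using that $X_{prs}$ and $X_{qrs}$ are subracks and that $X_1 \subseteq X_{prs} \cap X_{qrs}$, rules out every possibility except $a \in X'_{prs},\, b \in X'_{qrs}$ or the reverse. In either case the resulting element of $X'_{pqs}$ lies in $S$, establishing (a).

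The main obstacle is step (a): one must identify the \emph{right} pair of subracks whose union fails to close under $\rhd$, and exploit Lemma \ref{3.2.} to extract a contradiction from the supposition that this union is itself a subrack. Once non-emptiness is secured, the rest is a clean single-orbit transitivity argument driven by the assumption $m_1=m_2=m_3=1$.
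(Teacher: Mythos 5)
Your proposal is correct and follows essentially the same route as the paper: both arguments show that $Y = X_{prs}\cup X_{qrs}$ cannot be a subrack (you inline the proof of Lemma \ref{3.3.} via Lemma \ref{3.2.}, the paper cites Lemma \ref{3.3.} directly), extract a pair in $X^\prime_{prs}\times X^\prime_{qrs}$ or its reverse by the same case analysis, and then propagate to all of $X^\prime_{pqs}$ using the transitivity of $\varphi_x$ on $X^\prime_{pqs}$ together with the fact that $\varphi_x$ permutes $X^\prime_{prs}$ and $X^\prime_{qrs}$. Your write-up merely makes the final transitivity step more explicit than the paper does.
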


\begin{proof}
Let $Y= X_{prs}\cup X_{qrs}$. If $Y$ is a subrack of $X$, then $X=Y\cup X_{pqs}$, which is a contradiction to Lemma \ref{3.3.}. Hence there exist $z^\prime, f^\prime \in Y$ such that $y^\prime :=z^\prime\rhd f^\prime \in X\setminus Y=X^\prime_{pqs}$. If $z^\prime, f^\prime$ are both in $X_{prs}$ or $X_{qrs}$, (in particular if $z^\prime \in X_1$ or $f^\prime \in X_1$) then $z^\prime\rhd f^\prime \in Y$ since $X_{prs}$ and $X_{qrs}$ are subracks of $X$. Thus $(z^\prime, f^\prime)\in X^\prime_{prs}\times X^\prime_{qrs}\cup X^\prime_{qrs}\times X^\prime_{prs}$. Then the claim of the lemma follows from the fact that $\varphi_x$ acts transitively on $X^\prime_{pqs}$ and permutes both $X^\prime_{prs}$ and $X^\prime_{qrs}$.
\end{proof}

\begin{lem} \label{3.11.}
Assume that $X^\prime_{prs}$ and $X^\prime_{qrs}$ are subracks of $X$. Then $X^\prime_{pqs}$ is not a subrack of $X$.
\end{lem}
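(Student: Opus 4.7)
The plan is to argue by contradiction: assume that $X'_{pqs}$ is also a subrack, and derive that $X_1$ is a nonempty proper $Inn(X)$-invariant subset, contradicting indecomposability.

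First, by Lemma~\ref{3.7.} applied with $t=1$, $X_1$ is automatically a subrack, and $X$ is the disjoint union $X_1\cup X'_{pqs}\cup X'_{prs}\cup X'_{qrs}$. The key step is to verify, under the additional assumption that $X'_{pqs}$ is a subrack, that $\varphi_y(X_1)=X_1$ for every $y\in X$. I would split into four cases according to which piece contains $y$. If $y\in X_1$, Lemma~\ref{3.7.} directly gives $\varphi_y(X\setminus X_1)=X\setminus X_1$, hence $\varphi_y(X_1)=X_1$. If $y\in X'_{pqs}$, then $y\in X_{pqs}$, so Lemma~\ref{3.7.} with $t=pqs$ gives $\varphi_y(X_{pqs})=X_{pqs}$; the hypothesized subrack property of $X'_{pqs}$ yields $\varphi_y(X'_{pqs})\subseteq X'_{pqs}$, which is an equality by finiteness, and taking complements inside $X_{pqs}$ gives $\varphi_y(X_1)=X_1$. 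The cases $y\in X'_{prs}$ and $y\in X'_{qrs}$ are strictly analogous, using the respective subrack hypotheses of the lemma.

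Once every generator $\varphi_y$ preserves $X_1$, so does the whole inner group $Inn(X)$. Since $X$ is indecomposable, $Inn(X)$ acts transitively on $X$, forcing $X_1\in\{\emptyset,X\}$. But $X_1\neq X$ because $X'_{pqs}$ is a nonempty $pqs$-cycle of $\varphi_x$ disjoint from $X_1$, and $X_1\neq\emptyset$ because $x\in X_1$ (in a quandle, $\varphi_x(x)=x\rhd x=x$). This contradiction shows that $X'_{pqs}$ cannot be a subrack.

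The main obstacle I anticipate is the case $y\in X'_{pqs}$ in the above case analysis, since this is the only place where the hypothesized subrack structure on $X'_{pqs}$ actually enters; it works precisely because $X_1$ coincides with the complement of $X'_{pqs}$ inside the subrack $X_{pqs}$ already supplied by Lemma~\ref{3.7.}. Everything else reduces to direct applications of Lemma~\ref{3.7.}, transitivity of $Inn(X)$, and the quandle identity $x\rhd x=x$.
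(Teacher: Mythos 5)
Your proof is correct. The computational core is the same as the paper's: from Lemma~\ref{3.7.} each $X'_t$ permutes $X\setminus X_t$, and the three subrack hypotheses make each primed piece permute itself, so every inner automorphism respects the partition in a way that isolates $X_1$. The packaging differs. The paper forms $Y=X'_{pqs}\cup X'_{prs}\cup X'_{qrs}=X\setminus X_1$, checks that $Y$ is a subrack, and then contradicts Lemma~\ref{3.3.} via the decomposition $X=Y\cup X_1$ into two proper subracks. You instead show that the fixed-point set $X_1$ is invariant under $\varphi_y$ for \emph{every} $y\in X$ (adding the trivial fourth case $y\in X_1$), so that $X_1$ is a nonempty proper $Inn(X)$-invariant subset, contradicting transitivity directly. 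Since Lemma~\ref{3.3.} is itself proved by the invariant-subset-plus-transitivity argument (via Lemma~\ref{3.2.}), your route is essentially the same proof unwound one level: it is marginally more self-contained (no appeal to Lemmas~\ref{3.2.} and~\ref{3.3.}) at the cost of one extra case, while the paper's version reuses the general machinery it has already set up. Your justifications of $X_1\neq\emptyset$ (the quandle identity $x\rhd x=x$) and $X_1\neq X$ (nonemptiness of $X'_{pqs}$) are both sound.
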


\begin{proof}
 Assume that $X^\prime_{pqs}$ is a subrack of $X$. Let $Y=X^\prime_{pqs}\cup X^\prime_{prs}\cup X^\prime_{qrs}$. Lemma \ref{3.7.} implies that $X^\prime_{pqs}$ permutes $X\setminus X_{pqs}=X^\prime_{prs}\cup X^\prime_{qrs}$ and hence it permutes $Y$, since $X^\prime_{pqs}$ also permutes itself being a subrack of $X$. Similarly, $X^\prime_{prs}$ and $X^\prime_{qrs}$ permute, respectively, $X^\prime_{prs}$, $X\setminus X_{prs}=X^\prime_{pqs}\cup X^\prime_{qrs}$ and $X^\prime_{qrs}$, $X\setminus X_{qrs}=X^\prime_{pqs}\cup X^\prime_{prs}$. Hence $Y$ is a subrack of $X$. This is a contradiction to the fact that $X=Y\cup X_1, Y \neq X, X_1 \neq X$, and to Lemma \ref{3.3.}.
\end{proof}

\begin{lem} \label{3.12.}
Let $y\in X^\prime_{pqs}$. Then there exist $z \in X^\prime_{prs}$ and $f \in X^\prime_{qrs}$ with $y\rhd z\in X^\prime_{qrs}$, $y\rhd f\in X^\prime_{prs}$.
\end{lem}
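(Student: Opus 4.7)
The plan is to argue by contradiction. Suppose that for some $y\in X'_{pqs}$ no $z\in X'_{prs}$ satisfies $y\rhd z\in X'_{qrs}$. Since $y\in X_{pqs}$, the inner automorphism $\varphi_y$ commutes with $\varphi_x^{pqs}$, hence preserves $X_{pqs}$ and its complement $X'_{prs}\sqcup X'_{qrs}$. The failure assumption combined with bijectivity of $\varphi_y$ forces $\varphi_y(X'_{prs})=X'_{prs}$ and $\varphi_y(X'_{qrs})=X'_{qrs}$. Because $\varphi_x$ preserves each of $X'_{pqs}$, $X'_{prs}$, $X'_{qrs}$ and acts transitively on $X'_{pqs}$, the conjugation $\varphi_{y'}=\varphi_x^k\varphi_y\varphi_x^{-k}$ propagates these invariances to every $y'\in X'_{pqs}$; in particular $\varphi_{y'}$ has no fixed points inside $X'_{prs}\cup X'_{qrs}$, so $\mathrm{Fix}(\varphi_{y'})\subseteq X_{pqs}$.

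Next I would invoke Lemma~\ref{3.10.} to obtain $z\in X'_{prs}$, $f\in X'_{qrs}$ with $z\rhd f=y$ (Case~I) or $f\rhd z=y$ (Case~II); I treat Case~I (Case~II is analogous via the identity $(f\rhd z)\rhd f=f\rhd(z\rhd f)$). The quandle identity $(z\rhd f)\rhd z=z\rhd(f\rhd z)$ yields
\[
y\rhd z \;=\; z\rhd(f\rhd z),
\]
and Lemma~\ref{3.7.} constrains $f\rhd z\in X'_{pqs}\sqcup X'_{prs}$. If $f\rhd z\in X'_{pqs}$, a further application of Lemma~\ref{3.7.} places $z\rhd(f\rhd z)\in X\setminus X_{prs}=X'_{pqs}\sqcup X'_{qrs}$, contradicting the bad-case condition $y\rhd z\in X'_{prs}$.

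The remaining sub-case is $z_1:=f\rhd z\in X'_{prs}$, which I plan to handle by iteration. The computation $f\rhd y=(f\rhd z)\rhd f=z_1\rhd f$ together with Lemma~\ref{3.7.} gives $f\rhd y\in X'_{pqs}$, so the pair $(z_1,f)$ realises the same Case~I configuration at $y_1:=f\rhd y\in X'_{pqs}$. Because the bad case holds at $y_1$ by propagation, the same dichotomy again forces $f\rhd z_1\in X'_{prs}$. Repeating, the $\varphi_f$-orbit of $z$ is trapped inside $X'_{prs}$ and that of $y$ inside $X'_{pqs}$. By Lemma~\ref{3.9.} applied to $f$, these orbits have length in $\{pqs,prs\}$; comparing with $|X'_{pqs}|=pqs$ and $|X'_{prs}|=prs$ identifies them with the $pqs$- and $prs$-cycles of $\varphi_f$, so $\varphi_f(X'_{pqs})=X'_{pqs}$.

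To close the argument I would apply the analog of Lemma~\ref{3.10.} to $f\in X'_{qrs}$ (whose proof is identical, with $X'_{qrs}$ playing the role of $X'_{pqs}$): there exist $y'\in X'_{pqs}$, $z'\in X'_{prs}$ with $y'\rhd z'=f$ or $z'\rhd y'=f$. The first possibility contradicts the bad case $\varphi_{y'}(X'_{prs})=X'_{prs}$, so $z'\rhd y'=f$ and $\varphi_f=\varphi_{z'}\varphi_{y'}\varphi_{z'}^{-1}$. Writing $S:=\varphi_{z'}^{-1}(X'_{pqs})$, the invariance $\varphi_f(X'_{pqs})=X'_{pqs}$ becomes $\varphi_{y'}(S)=S$ with $|S|=pqs$ and $S\subseteq X\setminus X_{prs}$ (by Lemma~\ref{3.7.}). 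Because $y'\in X'_{pqs}$ is fixed by $\varphi_{y'}$, the $pqs$-cycle of $\varphi_{y'}$ accommodates at most $pqs-1$ elements of $X'_{pqs}$ and so must meet $X_1\subseteq X_{prs}$; hence $S$ cannot be this cycle, and must be a union of fixed points of $\varphi_{y'}$. By the inclusion $\mathrm{Fix}(\varphi_{y'})\subseteq X_{pqs}$ from the first paragraph, such fixed points lie in $X_{pqs}$, forcing $S\subseteq X'_{pqs}$ and, by cardinality, $S=X'_{pqs}$. Consequently $\varphi_{z'}(X'_{pqs})=X'_{pqs}$, contradicting $\varphi_{z'}(y')=f\in X'_{qrs}$. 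The main obstacle will be the iteration in the third paragraph and the orbit-uniqueness analysis in the fourth, both of which depend delicately on the bad-case cycle decomposition of $\varphi_{y'}$.
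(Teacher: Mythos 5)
Your proof shares its opening move with the paper's --- assume $\varphi_y(X^\prime_{prs})=X^\prime_{prs}$ (equivalently $\varphi_y(X^\prime_{qrs})=X^\prime_{qrs}$), propagate this to every $y^\prime\in X^\prime_{pqs}$ by conjugating with powers of $\varphi_x$, and pin the $prs$- and $qrs$-cycles of each $\varphi_{y^\prime}$ down to exactly $X^\prime_{prs}$ and $X^\prime_{qrs}$ --- but after that the two arguments genuinely diverge. The paper produces $z\in X^\prime_{prs}$ with $z\rhd y\in X^\prime_{qrs}$ (by ruling out $X_{pqs}\cup X_{prs}$ as a subrack via Lemma \ref{3.3.}), reads off the cycles of $\varphi_{z\rhd y}$ to show that $X^\prime_{prs}$ and $X^\prime_{qrs}$ are subracks, and then forces $X^\prime_{pqs}$ to be one as well, contradicting Lemma \ref{3.11.}. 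You instead start from the Lemma \ref{3.10.} configuration $z\rhd f=y$, iterate the quandle identity along the $\varphi_f$-orbit to trap the orbits of $z$ and $y$ inside $X^\prime_{prs}$ and $X^\prime_{pqs}$, use $pqs<prs$ to identify the $pqs$-cycle of $\varphi_f$ with $X^\prime_{pqs}$, and close with a conjugation-and-counting argument on the invariant set $S=\varphi_{z^\prime}^{-1}(X^\prime_{pqs})$. I checked the steps (the dichotomy $f\rhd z\in X^\prime_{pqs}\cup X^\prime_{prs}$, the inductive step $y_{n+1}=z_{n+1}\rhd f\in X^\prime_{pqs}$, the identification of the orbits, and the exclusion of the three nontrivial cycles from $S$) and they hold; the symmetric analogues of Lemmas \ref{3.9.} and \ref{3.10.} for $f\in X^\prime_{qrs}$ that you invoke do have verbatim proofs. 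Your route entirely avoids Lemma \ref{3.11.} and the subrack bookkeeping, at the price of the orbit iteration; both are valid. Two points to tighten when you write this up. First, your ``in particular $\varphi_{y^\prime}$ has no fixed points inside $X^\prime_{prs}\cup X^\prime_{qrs}$'' is not automatic from the invariances alone: it needs Lemma \ref{3.8.} (the restrictions are nontrivial), Lemma \ref{3.9.} (the resulting cycles have length $prs$ or $qrs$), and the multiplicity-one profile to match them with $X^\prime_{prs}$ and $X^\prime_{qrs}$ --- exactly the paper's first paragraph. This claim is load-bearing for you, since it gives both $f\rhd y\neq y$ (via the crossed-set property) in your orbit argument and the inclusion $\mathrm{Fix}(\varphi_{y^\prime})\subseteq X_{pqs}$ in your final paragraph, so spell it out. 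Second, when you argue that $S$ is a union of fixed points, you explicitly exclude only the $pqs$-cycle; add that the $prs$-cycle is disjoint from $X\setminus X_{prs}\supseteq S$ and that the $qrs$-cycle is excluded by $|S|=pqs<qrs$.
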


\begin{proof}
Assume that $y\rhd z\in X^\prime_{prs}$ for all $z\in X^\prime_{prs}$. Then $y^\prime\rhd z\in X^\prime_{prs}$ for all $z \in X^\prime_{prs}$ and $y\rhd X^\prime_{qrs}=X^\prime_{qrs}$. By Lemma \ref{3.8.} $y\rhd z \neq z$ and $y\rhd f \neq f$. This implies that the restrictions $\varphi_y|_{X^\prime_{prs}}$ and $\varphi_y|_{X^\prime_{qrs}}$ are not the identity. Therefore $\varphi_y$ has at least two cycles consisting of elements of $X^\prime_{prs}\cup X^\prime_{qrs}$. By Lemma \ref{3.9.} these are the $prs-$ and $qrs-$ cycles of $\varphi_y$. Therefore the $prs-$ and $qrs-$ cycles of $\varphi_y$ consist of the elements of $X^\prime_{prs}$ and the elements of $X^\prime_{qrs}$, respectively.
\\[1pt] ${}$ \; If $z\rhd y^\prime \in X^\prime_{pqs}$ for all $y^\prime \in X^\prime_{pqs}$, $z\in X^\prime_{prs}$, then $Y= X_{pqs}\cup X_{prs}$ is a subrack of $X$, a contradiction to $X= Y\cup X_{qrs}$ and Lemma \ref{3.3.}. Thus there exists $z\in X^\prime_{prs}$ with $z\rhd y \notin X^\prime_{pqs}$. Then $z\rhd y \in X^\prime_{qr s}$. Now $z$ is in $prs-$ cycle of $\varphi_y$. Thus the $prs-$cycle of $\varphi_{z\rhd y}$ consists of the elements of $z\rhd X^\prime_{prs}\subseteq X_{prs}$, and one of these elements is $z\in X^\prime_{prs}$. Since $z\rhd y\in X^\prime_{qrs}$, the elements of the $prs-$cycle of $\varphi_{z\rhd y}$ also belong to $X \setminus X_{qrs}$, and hence to $X^\prime_{prs}$. Thus $z$ permutes $X^\prime_{prs}$ and hence $X^\prime_{prs}$ is a subrack of $X$. By the same reason, $X^\prime_{qrs}$ is a subrack of $X$.
\\[1pt] ${}$ \; Consider now the $qrs-$ cycle of $\varphi_{z\rhd y}$, which consists of the elements of $z\rhd X^\prime_{qrs} \subseteq X^\prime_{pqs} \cup X^\prime_{qrs}$. Since $z\rhd y\in X^\prime_{qrs}$ permutes $z\rhd X^\prime_{qrs}$ and it maps $X^\prime_{qrs}$ to $X_{qrs}$ and $X^\prime_{pqs}$ to $X^\prime_{pqs} \cup X^\prime_{prs}$, respectively, we conclude that $z\rhd X^\prime_{qrs}\subseteq X^\prime_{qrs}$ or $z\rhd X^\prime_{qrs}\subseteq X^\prime_{pqs}$. If $z\rhd X^\prime_{qrs}\subseteq X^\prime_{qrs}$, then the first part of the proof applied to $z$ instead of $y$ implies that $X^\prime_{pqs}$ is a subrack of $X$. This is a contradiction to Lemma \ref{3.11.}. Assume now that $z\rhd X^\prime_{qrs}\subseteq X^\prime_{pqs}$. Since $\varphi_x$ acts transitively on $X^\prime_{prs}$ and on $X^\prime_{pqs}$, we conclude that

\begin{center}
  $X^\prime_{prs} \rhd X^\prime_{qrs}=X^\prime_{pqs}$.
\end{center}
By applying $X^\prime_{pqs}$ to this equation and using the self-distributivity of $\rhd$ one concludes that

\begin{center}
$X^\prime_{pqs} \rhd X^\prime_{pqs}=X^\prime_{pqs} \rhd(X^\prime_{prs}\rhd X^\prime_{qrs})\subseteq (X^\prime_{pqs} \rhd X^\prime_{prs})\rhd(X^\prime_{pqs} \rhd X^\prime_{qrs})=(X^\prime_{prs} \rhd X^\prime_{qrs})=X^\prime_{pqs}$.
\end{center}
Again we conclude that $X^\prime_{pqs}$ is a subrack of $X$ which is a contradiction to Lemma \ref{3.11.}.
\end{proof}

\begin{lem} \label{3.13.}
Let $y \in X^\prime_{pqs}$. Then $y\rhd x\in X^\prime_{pqs}$.
\end{lem}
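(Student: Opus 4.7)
The plan is a proof by contradiction, using a propagation argument along the $\varphi_x$-orbit of $y$ to place three distinguished elements of $X_1$ inside a common $\varphi_y$-cycle, and then closing with a subrack-decomposition contradiction via Lemma \ref{3.3.}.

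First I would observe that $y \rhd x \in X_{pqs}$. Using self-distributivity together with $x \rhd x = x$, a short induction yields $\varphi_x^k(y \rhd x) = \varphi_x^k(y) \rhd x$ for every $k \ge 1$; taking $k = pqs$ and using $y \in X^\prime_{pqs}$ gives $\varphi_x^{pqs}(y \rhd x) = y \rhd x$, so $y \rhd x \in X_{pqs} = X_1 \cup X^\prime_{pqs}$. It therefore suffices to exclude the case $a := y \rhd x \in X_1$.

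Assume for contradiction that $a \in X_1$, so $x \rhd a = a$. The crossed-set axiom applied to the pair $(x,a)$ forces $a \rhd x = x$. Since $\varphi_a = \varphi_y \varphi_x \varphi_y^{-1}$, the identity $\varphi_a(x) = x$ gives $\varphi_y^{-1}(x) \in X_1$; set $w := \varphi_y^{-1}(x) \in X_1$, so that $y \rhd w = x$. Applying $\varphi_x$ to the two identities $y \rhd x = a$ and $y \rhd w = x$, and using self-distributivity together with $x,w,a \in X_1$ (so $x \rhd x = x$, $x \rhd w = w$, $x \rhd a = a$), we obtain $\varphi_x(y) \rhd x = a$ and $\varphi_x(y) \rhd w = x$. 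Iterating along the $\varphi_x$-orbit of $y$, one gets $y^\prime \rhd x = a$ and $y^\prime \rhd w = x$ for every $y^\prime \in X^\prime_{pqs}$. Thus every $\varphi_{y^\prime}$ with $y^\prime \in X^\prime_{pqs}$ sends $w \mapsto x \mapsto a$, and the cycle-structure analysis of Lemma \ref{3.9.} (which places the $pqs$-cycle $S$ of each $\varphi_{y^\prime}$ inside $X_{pqs}$ and forces $x \in S$, since $\varphi_{y^\prime}(x) \neq x$) puts the three distinct elements $w, x, a \in X_1$ in a common $\varphi_{y^\prime}$-cycle $S$ for each such $y^\prime$.

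The main obstacle is to turn this rigidity into a contradiction. I would next invoke Lemma \ref{3.12.} to obtain $z \in X^\prime_{prs}$ with $y \rhd z \in X^\prime_{qrs}$, and then compute $(y \rhd z) \rhd x$ in two ways: directly using the identities propagated above, and via $\varphi_{y \rhd z} = \varphi_y \varphi_z \varphi_y^{-1}$ combined with Lemma \ref{3.7.} (which says $z$ permutes $X \setminus X_{prs}$). The expectation is that the constant value $y^\prime \rhd x = a$ across $y^\prime \in X^\prime_{pqs}$, combined with the fact that $y$ crosses $X^\prime_{prs}$ into $X^\prime_{qrs}$, forces one of $X_1 \cup X^\prime_{prs}$ or $X_1 \cup X^\prime_{qrs}$ to be closed under $\rhd$. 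Together with the subrack $X_{pqs} = X_1 \cup X^\prime_{pqs}$ coming from Lemma \ref{3.7.}, this gives $X$ as a union of two proper subracks, contradicting Lemma \ref{3.3.}. The delicate step is this final subrack closure; I expect it to require carefully tracking how $\varphi_z$ moves $X_1$ inside $X_{pqs}$ using the identities from the propagation argument.
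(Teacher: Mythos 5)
Your opening moves are sound and match the paper's: $y\rhd x\in X_{pqs}$ because $X_{pqs}$ is a subrack (your explicit computation $\varphi_x^{pqs}(y\rhd x)=\varphi_x^{pqs}(y)\rhd x$ is a fine substitute), and the reduction to excluding $a:=y\rhd x\in X_1$ is exactly the paper's starting point. The derived identities $a\rhd x=x$, $w:=\varphi_y^{-1}(x)\in X_1$, and the propagation $y'\rhd x=a$, $y'\rhd w=x$ for all $y'\in X^\prime_{pqs}$ are all correct (modulo the unchecked but easy fact that $w\neq a$, which needs the observation that $\varphi_y$ has no $2$-cycle since its cycle lengths are $1,pqs,prs,qrs$). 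But the proof is not finished, and the endgame you sketch does not work as stated. You propose to show that ``one of $X_1\cup X^\prime_{prs}$ or $X_1\cup X^\prime_{qrs}$'' is closed under $\rhd$ and then apply Lemma \ref{3.3.} together with $X_{pqs}$. Those two sets are precisely $X_{prs}$ and $X_{qrs}$, which are \emph{already} subracks by Lemma \ref{3.7.}, so establishing their closure gives nothing new; and more importantly $X_{pqs}\cup X_{prs}\neq X$ (it misses $X^\prime_{qrs}$), so you do not obtain $X$ as a union of \emph{two} proper subracks and Lemma \ref{3.3.} cannot be invoked. To make this route work you would need something like: $X_{pqs}\cup X_{prs}$ is a subrack (then $X=(X_{pqs}\cup X_{prs})\cup X_{qrs}$ contradicts Lemma \ref{3.3.}), and it is not clear that the propagation identities yield this. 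The rest of the closing argument is explicitly left as an ``expectation,'' so the contradiction is never actually derived.

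For comparison, the paper closes differently and more directly: since $a=y\rhd x$, one has $\varphi_a=\varphi_y\varphi_x\varphi_y^{-1}$, so the $prs$- and $qrs$-cycles of $\varphi_a$ are the sets $y\rhd X^\prime_{prs}$ and $y\rhd X^\prime_{qrs}$. Conjugating by $\varphi_x$ (which fixes $a\in X_1$, hence commutes with $\varphi_a$, and acts transitively on each of $X^\prime_{prs}$, $X^\prime_{qrs}$) shows that if one element of $X^\prime_{prs}$ lies in the $qrs$-cycle then all of $X^\prime_{prs}$ does, and symmetrically; since $prs\neq qrs$ this is impossible, so $\varphi_y$ must permute both $X^\prime_{prs}$ and $X^\prime_{qrs}$, contradicting Lemma \ref{3.12.}. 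I would encourage you either to adopt that cycle-counting finish or to supply a genuine two-subrack cover of $X$; as it stands your proposal has a real gap at the final step.
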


\begin{proof}
Since $y, x \in X_{pqs}$, we conclude that $y\rhd x \in X_{pqs}$. Assume that $y \rhd x\notin X^\prime_{pqs}$. Then $y \rhd x\in X_1$. The $prs-$ and $qrs-$cycles of $\varphi_{y\rhd x}$ consist of the elements of $y\rhd X^\prime_{prs}$ and $y\rhd X^\prime_{qrs}$, respectively. Since $y \in X^\prime_{pqs}$,
the ${p qs}-$ and $pr s-$cycles of $\varphi_{y\rhd x}$ contain together all elements of $X^\prime_{prs} \cup X^\prime_{qrs}$, by Lemma 2.1. Moreover, by conjugating with $\varphi_x$ we conclude that if an element of $X^\prime_{prs}$ (or $X^\prime_{qrs}$, respectively) is contained in a $qrs-$cycle (in a $prs-$cycle, respectively), then all the elements of $X^\prime_{prs}$ (of $X^\prime_{qrs}$, respectively) do
so. Since $prs\neq qrs$, this is not possible. Hence $\varphi_y$ permutes both $X^\prime_{prs}$ and $X^\prime_{qrs}$. This is a contradiction to Lemma \ref{3.12.}.
\end{proof}

\begin{lem} \label{3.14.}
Let $y\in X^\prime_{pqs}$. Let $\varphi_y=\sigma_1\sigma_2\sigma_3$ be the decomposition of $\varphi_y$ into the product of a $pqs-$, a $prs-$, and a $qrs-$ cycle. Then the entries of $\sigma_1$ belong to $X_{pqs}$ and the entries of $\sigma_2$ and $\sigma_3$ belong to $X^\prime_{prs}\cup X^\prime_{qrs}$.
\end{lem}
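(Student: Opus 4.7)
The plan is to establish two separate inclusions: (a) $\sigma_1\subseteq X_{pqs}$, and (b) $\sigma_2\cup\sigma_3\subseteq X^\prime_{prs}\cup X^\prime_{qrs}$. My starting points are two preparatory facts. First, since $y\in X^\prime_{pqs}\subseteq X_{pqs}$, Lemma~\ref{3.7.} guarantees that $X_{pqs}$ is a $\varphi_y$-invariant subrack of $X$, and therefore $X\setminus X_{pqs}=X^\prime_{prs}\cup X^\prime_{qrs}$ is also $\varphi_y$-invariant; hence every cycle $\sigma_i$ must lie entirely in $X_{pqs}$ or entirely in $X\setminus X_{pqs}$. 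Second, $\varphi_x^{pqs}$ commutes with $\varphi_y$: applying the rack identity $\varphi_x^{k}\varphi_y\varphi_x^{-k}=\varphi_{\varphi_x^{k}(y)}$ with $k=pqs$ gives $\varphi_x^{pqs}\varphi_y\varphi_x^{-pqs}=\varphi_y$, because $\varphi_x^{pqs}(y)=y$.

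For (a), I will use this commutation to force a divisibility constraint. Since $\varphi_x^{pqs}$ commutes with $\varphi_y$, it preserves each $\sigma_i$ setwise. On $X^\prime_{prs}$ the original $\varphi_x$-cycles have length $prs$, so $\varphi_x^{pqs}$ acts with orbits of length $prs/\gcd(prs,pqs)=r$, using that $p,q,r,s$ are pairwise coprime; an identical computation yields $r$-orbits on $X^\prime_{qrs}$. Therefore any $\varphi_y$-cycle contained in $X\setminus X_{pqs}$ is a disjoint union of $\varphi_x^{pqs}$-orbits of length $r$, so its length is divisible by $r$. Since $\gcd(r,pqs)=1$ and $r>1$, the $pqs$-cycle $\sigma_1$ cannot lie in $X\setminus X_{pqs}$, which forces $\sigma_1\subseteq X_{pqs}$.

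For (b), the starting point is Lemma~\ref{3.12.}, which supplies elements $z\in X^\prime_{prs}$ with $\varphi_y(z)\in X^\prime_{qrs}$ and $f\in X^\prime_{qrs}$ with $\varphi_y(f)\in X^\prime_{prs}$. Both $z$ and $f$ are moved by $\varphi_y$ and lie in $X\setminus X_{pqs}$, so their $\varphi_y$-cycles are contained in $X\setminus X_{pqs}$ and, by (a), must equal $\sigma_2$ or $\sigma_3$. If the cycles of $z$ and $f$ are distinct, then both $\sigma_2,\sigma_3\subseteq X\setminus X_{pqs}$ and we are done. The remaining case is that $z,f$ share a single straddling cycle $\sigma_i\subseteq X\setminus X_{pqs}$, while the other cycle $\sigma_j\in\{\sigma_2,\sigma_3\}$ is presumed to lie in $X_{pqs}$; to rule this out I plan to exhibit a proper subrack decomposition $X=Y\cup Z$ with $Y,Z\subsetneq X$ and invoke Lemma~\ref{3.3.}, in the style of the contradictions in Lemmas~\ref{3.11.} and \ref{3.12.}. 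A natural attempt is to take $Y=X_{pqs}$ augmented by the $\varphi_y$-fixed points of $X\setminus X_{pqs}$, and $Z$ built from the straddling cycle $\sigma_i$ together with whatever additional elements are forced by self-distributivity and the crossed-set property.

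The main obstacle will be this last step: verifying that the candidate $Y$ and $Z$ are indeed closed under $\rhd$ will require combining the invariance statements of Lemma~\ref{3.7.} with the crossed-set hypothesis and Lemma~\ref{3.13.} (to control $y\rhd x$), in the same delicate case-analysis style as the proofs of Lemmas~\ref{3.11.} and \ref{3.12.}. I expect this to be the technical heart of the argument.
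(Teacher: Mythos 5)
Your part (a) is correct, and it is essentially the content of Lemma~\ref{3.9.}: since $\varphi_x^{pqs}$ commutes with $\varphi_y$ and the three nontrivial cycles of $\varphi_y$ have pairwise distinct lengths, each cycle lying in $X\setminus X_{pqs}$ is a union of $\varphi_x^{pqs}$-orbits whose length is divisible by $r$, while $\gcd(r,pqs)=1$ and $r>1$; the paper simply cites Lemma~\ref{3.9.} for this step. The problem is part (b). After invoking Lemma~\ref{3.12.} you correctly reduce to the case in which the two witnesses $z\in X^\prime_{prs}$ and $f\in X^\prime_{qrs}$ lie in the \emph{same} cycle of $\varphi_y$, and this is exactly where your argument stops: the proposed decomposition $X=Y\cup Z$ is never actually specified ($Z$ is ``built from the straddling cycle together with whatever additional elements are forced''), closure under $\rhd$ is not verified, and it is not clear the resulting sets would be proper subracks. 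This case is the only one with genuine content --- a single $prs$- or $qrs$-cycle meeting both $X^\prime_{prs}$ and $X^\prime_{qrs}$ while the other nontrivial cycle sits inside $X_{pqs}$ is not excluded by any counting or divisibility consideration --- so the proof is incomplete.

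The paper closes this gap by a different and more economical device, which you may want to adopt. It starts from Lemma~\ref{3.10.} rather than Lemma~\ref{3.12.}: write $y=z\rhd f$ with $z\in X^\prime_{prs}$, $f\in X^\prime_{qrs}$ (after possibly swapping $p$ and $q$). Because $X_{qrs}$ is a subrack containing both $f$ and $x$ but not $z$, and both $x$ and $z$ are moved by $\varphi_f$ (crossed-set property), the elements $x$ and $z$ lie in \emph{different} cycles of $\varphi_f$. Conjugating, $\varphi_y=\varphi_z\varphi_f\varphi_z^{-1}$ carries these two cycles to the cycles of $\varphi_y$ through $z\rhd x$ and through $z\rhd z=z$, which are therefore distinct; and both $z\rhd x$ (by the analogue of Lemma~\ref{3.13.} for $X^\prime_{prs}$) and $z$ lie in $X^\prime_{prs}$. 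Thus two distinct cycles of $\varphi_y$ meet $X\setminus X_{pqs}$, Lemma~\ref{3.9.} identifies them as the $prs$- and $qrs$-cycles $\sigma_2,\sigma_3$, and the $\varphi_y$-invariance of $X\setminus X_{pqs}$ from Lemma~\ref{3.7.} forces both to lie entirely in $X^\prime_{prs}\cup X^\prime_{qrs}$. To salvage your version you would have to either prove that your $z$ and $f$ cannot share a cycle, or replace that step by this conjugation argument.
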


\begin{proof}
By Lemma \ref{3.10.} there exist $z \in X^\prime_{prs}$ and $f \in X^\prime_{qrs}$ such that $ z\rhd f=y$ or $f\rhd z=y$. By interchanging $p$ and $q$ if necessary, we may assume that $ z\rhd f=y$.
\\[1pt] ${}$ \; Note that $f\rhd x \neq x$ and $f\rhd z\neq z$ since $X$ is a crossed set. Moreover,
since $f,x \in X_{qrs}$ and $X_{qrs}$ is a subrack of $X$, and since $z \notin X_{qrs}$, the elements $z$ and $x$ are in different cycles of  $\varphi_f$. Let $\varphi_f=(x...)(z...)(...)$ be the product of disjoint cycles. Then
\begin{center}
$\varphi_y=\varphi_{z\rhd f}=\varphi_z\varphi_f\varphi^{-1}_z=(z \rhd x...)(z...)(...).$
\end{center}
Since $z\rhd x \in X^\prime_{prs}$ by Lemma \ref{3.13.} and $z \in X^\prime_{prs}$, $\varphi_y$ has two cycles containing elements of $X^\prime_{prs}\cup X^\prime_{qrs}$. These are the $prs-$ and $qrs-$cycles by Lemma \ref{3.9.}. This implies the claim.
\end{proof}

\begin{prop} \label{3.15.}
There is no finite indecomposable crossed set $X$ with profile $1^{m_0}l_1l_2l_3$, where $1< l_1< l_2 < l_3$, $l_1 \nmid l_2, \; l_1, l_2 \nmid l_3$ and $l_k \mid lcm(l_{k+1}, l_{k+2})\pmod 3$ for $k\in \{1, 2, 3\}$.
\end{prop}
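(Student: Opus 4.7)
The proof is by contradiction. As explained in the preamble to this subsection, it suffices to treat the case $m_1 = m_2 = m_3 = 1$ together with $l_3 \mid lcm(l_1, l_2)$, so that $(l_1, l_2, l_3) = (pqs, prs, qrs)$ with $p < q < r$ pairwise coprime and $p, q, r > 1$; the two remaining cases $l_1 \mid lcm(l_2,l_3)$ and $l_2 \mid lcm(l_1,l_3)$ will go through by permuting the roles of $p,q,r$.

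Fix $x\in X$ and pick $y \in X^\prime_{pqs}$. Lemma~\ref{3.14.} gives a decomposition $\varphi_y = \sigma_1 \sigma_2 \sigma_3$, with $\sigma_1$ of length $pqs$ supported in $X_{pqs}$ and $\sigma_2, \sigma_3$ of lengths $prs, qrs$ supported in $X^\prime_{prs} \cup X^\prime_{qrs}$. Since $|\sigma_2|+|\sigma_3| = prs + qrs = |X^\prime_{prs}|+|X^\prime_{qrs}|$, the cycles $\sigma_2$ and $\sigma_3$ partition $X^\prime_{prs} \cup X^\prime_{qrs}$ exactly. Setting $a := |\sigma_2 \cap X^\prime_{prs}|$, the remaining intersection sizes are determined: $|\sigma_2 \cap X^\prime_{qrs}| = prs-a$, $|\sigma_3 \cap X^\prime_{prs}| = prs-a$, and $|\sigma_3 \cap X^\prime_{qrs}| = qrs-prs+a$, with $0 \leq a \leq prs$.

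Lemma~\ref{3.12.} supplies $z \in X^\prime_{prs}$ with $\varphi_y(z) \in X^\prime_{qrs}$ and $f \in X^\prime_{qrs}$ with $\varphi_y(f) \in X^\prime_{prs}$, so at least one of $\sigma_2, \sigma_3$ mixes the two classes, and in fact neither of $\sigma_2, \sigma_3$ is supported in a single class. The plan now is to conjugate $\varphi_y$ with suitable powers of $\varphi_x$ (noting that each $\varphi_x(y) = x \rhd y$ is again in $X^\prime_{pqs}$ by Lemma~\ref{3.13.}), and use the transitive $\varphi_x$-action on each of $X^\prime_{prs}$ and $X^\prime_{qrs}$ to argue that the interleaving pattern of $X^\prime_{prs}$- and $X^\prime_{qrs}$-elements along the cycle $\sigma_2$ is periodic with a period dividing $gcd(prs,qrs) = rs$, and similarly for $\sigma_3$. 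Since $prs \neq qrs$ while the two cycles of $\varphi_y$ and of $\varphi_{x \rhd y}$ must match consistently after the conjugation, I expect to be forced into a configuration where $a$ must simultaneously satisfy $a = 0$ and $a = prs$, giving the desired contradiction.

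The main obstacle is making the interleaving/periodicity step rigorous: concretely, I need to pin down that the positions in $\sigma_2$ occupied by $X^\prime_{prs}$-elements form a union of residue classes modulo some common divisor of $prs$ and $qrs$, by tracking how the single $\varphi_x$-cycle on $X^\prime_{prs}$ (of length $prs$) and the single $\varphi_x$-cycle on $X^\prime_{qrs}$ (of length $qrs$) are synchronised by conjugation. As a fallback strategy, I would mimic the arguments of Lemmas~\ref{3.10.}--\ref{3.12.}: starting from the mixing guaranteed by Lemma~\ref{3.12.}, iteratively apply the self-distributive law together with the crossed set axiom to build two proper subracks $Y, Z \subsetneq X$ with $Y \cup Z = X$, producing the contradiction directly from Lemma~\ref{3.3.}.
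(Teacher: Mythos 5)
Your setup (the reduction to $(l_1,l_2,l_3)=(pqs,prs,qrs)$, the cycle decomposition from Lemma~\ref{3.14.}, the mixing statement from Lemma~\ref{3.12.}) matches the paper's, but the step that is supposed to produce the contradiction is missing. The ``interleaving/periodicity'' argument is only a plan, and as stated it does not go through: conjugating $\varphi_y$ by a power of $\varphi_x$ yields $\varphi_{\varphi_x^k(y)}$, a \emph{different} inner automorphism, so it imposes no direct constraint on how the $X^\prime_{prs}$- and $X^\prime_{qrs}$-entries are distributed along the cycles of the fixed permutation $\varphi_y$; nothing forces the positions occupied by $X^\prime_{prs}$ in $\sigma_2$ to form residue classes modulo $rs$, and the claimed dichotomy $a=0$ versus $a=prs$ is asserted (``I expect to be forced'') rather than derived. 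The fallback of ``building two proper subracks'' is likewise only a hope, not an argument. So there is a genuine gap at the heart of the proof.

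The paper closes the argument much more directly and without any counting of interleavings. For $y\in X^\prime_{pqs}$ and $z\in X^\prime_{prs}$ one has $\varphi_z^{prs}(x)=x$, hence $\varphi_{y\rhd z}^{prs}(y\rhd x)=y\rhd x$, and $y\rhd x\in X^\prime_{pqs}$ by Lemma~\ref{3.13.}. If $y\rhd z$ were in $X^\prime_{prs}$, then by the symmetric form of Lemma~\ref{3.14.} the $prs$-cycle of $\varphi_{y\rhd z}$ would have all its entries in $X_{prs}$, which is incompatible with the element $y\rhd x\in X^\prime_{pqs}$ being fixed by $\varphi_{y\rhd z}^{prs}$. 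Hence $y\rhd X^\prime_{prs}\subseteq X^\prime_{qrs}$, and by symmetry $y\rhd X^\prime_{qrs}\subseteq X^\prime_{prs}$; since $\varphi_y$ is injective and $\lvert X^\prime_{prs}\rvert=prs\neq qrs=\lvert X^\prime_{qrs}\rvert$, this is impossible. Your closing observation that $prs\neq qrs$ is indeed the right final contradiction, but it only bites once these two set inclusions are established, and that is exactly what your sketch does not do.
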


\begin{proof}
Assume to the contrary that $X$ is a finite indecomposable crossed set with profile $1^{m_0}l_1l_2l_3$ where $1< l_1< l_2 < l_3$, $l_1 \nmid l_2, \; l_1, l_2 \nmid l_3$ and $l_k \mid lcm(l_{k+1}, l_{k+2})\pmod 3$ for $k\in \{1, 2, 3\}$. Let $l_1, l_2, l_3, X_t$ and $X^\prime_t$ be defined as above for integer $t\geq 1$.
\\[1pt] ${}$ \; Let $y \in X^\prime_{pqs}$ and $z \in X^\prime_{prs}$. Then $z\rhd x\neq x$ and $\varphi^{prs}_z(x)=x$ by Lemma \ref{3.13.}. Therefore $\varphi^{prs}_{y\rhd z}(y\rhd x)=y\rhd x$. Moreover, $y\rhd x\in X^\prime_{pqs}$ by Lemma \ref{3.13.}. Lemma \ref{3.7.} implies that $y\rhd z\in X^\prime_{prs}\cup X^\prime_{qrs}$. If $y\rhd z\in X^\prime_{prs}$, then the entries of $prs-$cycle of $\varphi_{y\rhd z}$ belong to $X_{prs}$ by Lemma \ref{3.14.}, in contradiction to $y\rhd x\in X^\prime_{pqs}$. Thus $y\rhd z\in X^\prime_{qrs}$. This implies that $y\rhd X^\prime_{prs} \subseteq X^\prime_{qrs}$ and $y\rhd X^\prime_{qrs} \subseteq X^\prime_{prs}$ by symmetry. This is impossible since $\mid X^\prime_{prs}\mid= prs\neq qrs = \mid X^\prime_{qrs}\mid$.
\end{proof}

\begin{center}
	\section{\textsc{Hayashi's Conjecture on Finite Indecomposable Racks and Quanldes}} \label{4.}
\end{center}
\paragraph{} In \cite{7} C. Hayashi proposed a conjecture on the cycle structure of finite connected quandles which we recall here for a finite indecomposable rack.

\begin{conj}
Let $X$ be a finite indecomposable rack with profile $1^{m_0}l_1^{m_1}l_2^{m_2}...l_k^{m_k}$. Then $l_i|l_k$ (i.e., $l_i$ divides $l_k$) for any integer $i$ with $1\leq i\leq k-1$.
\end{conj}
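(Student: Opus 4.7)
The plan is to pursue an inductive strategy on the number $k$ of distinct non-trivial cycle lengths, using Proposition \ref{3.5.} as the first filter and the fixed-point subrack machinery from Proposition \ref{3.15.} as the main engine. I would begin by applying Proposition \ref{3.5.} to the partition separating $l_k$ from $l_1,\dots,l_{k-1}$. This yields the dichotomy: either $lcm(l_1,\dots,l_{k-1})\mid l_k$, in which case every $l_i$ with $i<k$ divides $l_k$ and the conjecture is immediate; or $l_k\mid lcm(l_1,\dots,l_{k-1})$. So I may restrict to this harder case and seek a contradiction from the existence of some $i<k$ with $l_i\nmid l_k$. Iterating Proposition \ref{3.5.} over every partition $1\leq j<k$ produces a chain of divisibility relations between the partial lcms, which forces the prime-exponent vectors of $l_1,\dots,l_k$ into a restricted combinatorial shape: there must exist a prime $p$ whose maximum exponent among the $l_j$ is attained at some $l_i$ with $i<k$ but never at $l_k$.

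Next I would try to generalise Lemmas \ref{3.7.}--\ref{3.14.} to this multi-cycle setting. For each $t\geq 1$ the set $X_t=\{y\in X\mid\varphi_x^t(y)=y\}$ is a subrack, and the $\varphi_x$-orbits of length exactly $l_j$ fill $X'_{l_j}:=X_{l_j}\setminus\bigcup_{l_h\mid l_j,\,h<j}X_{l_h}$. With the prime $p$ and the index $i$ as above, $X_{l_k}$ is a proper subrack that contains all $l_j$-cycles with $l_j\mid l_k$ but misses the $l_i$-cycle, while for any other $j$ with $l_j\nmid l_k$ the subrack $X_{\mathrm{lcm}(l_j,l_k)}$ can be paired with $X_{l_k}$ to force incompatibilities via Corollary \ref{3.4.} and Lemma \ref{3.3.}. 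I would then decompose $\varphi_y$ for $y\in X'_{l_i}$ into its cycle types, track which $X'_t$ the cycle supports lie in using self-distributivity (and, in the crossed-set case, the symmetry $y\rhd z=z\Leftrightarrow z\rhd y=y$ that powered Lemma \ref{3.8.}), and try to extract a cardinality equation comparing $|X'_{l_i}|=m_i l_i$ with $|X'_{l_k}|=m_k l_k$ which is incompatible with the induced orbit structure.

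The main obstacle, and the reason I expect this plan to succeed cleanly only under additional hypotheses, is that the rigidity exploited in Proposition \ref{3.15.} relied essentially on having exactly three cycle lengths with all multiplicities equal to one: each $\varphi_y$ then had precisely three cycles whose supports were pinned down by Lemmas \ref{3.13.} and \ref{3.14.}, and the final counting step was the clean inequality $prs\neq qrs$. With $m_i>1$ or with $k>3$, the $l_j$-cycles of a single $\varphi_y$ may be spread across many of the $X'_t$, and the same $X'_t$ may carry cycles of more than one length inside one inner automorphism, so the bookkeeping that made Lemma \ref{3.12.} decisive loses its rigidity. Additionally, Lemma \ref{3.8.} uses the crossed-set identity, so the general rack case lacks this tool entirely. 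I therefore expect the proposed approach to close the conjecture for finite indecomposable crossed sets with $k\leq 3$, and by iteration of the reduction step also for any profile whose partial lcms form a totally ordered divisibility chain with $l_k$ on top; a complete proof in full generality will likely require either a strengthening of Lemma \ref{3.3.} to control unions of arbitrarily many subracks with prescribed intersections, or a genuinely new ingredient such as a representation-theoretic constraint on the cycle types compatible with the transitive action of $Inn(X)$ on $X$.
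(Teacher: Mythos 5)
This statement is a conjecture, and the paper never claims to prove it: Section \ref{4.} only verifies special cases (profiles with at most two nontrivial cycle lengths via Proposition \ref{3.5.}, crossed sets with profile $1^{m_0}l_1l_2l_3$ via Proposition \ref{3.15.}, braided racks, affine quandles, and small or prime-power orders). Your proposal likewise does not prove the statement, and you say so yourself; so the honest verdict is that there is a gap, but it is the same gap the paper leaves open. Your first reduction is exactly the paper's Case 2: Proposition \ref{3.5.} applied to the partition at $i=k-1$ gives either $lcm(l_1,\dots,l_{k-1})\mid l_k$ (done) or $l_k\mid lcm(l_1,\dots,l_{k-1})$, and your attempted generalisation of the fixed-point-subrack machinery is exactly the paper's Case 3. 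Your diagnosis of where the argument breaks is also accurate and matches the paper's implicit limitations: Lemmas \ref{3.8.}--\ref{3.14.} need the crossed-set identity, need $\varphi_x$ to act transitively on each $X'_{l_j}$ (hence all multiplicities equal to one), and need exactly three nontrivial cycle lengths so that each cycle of $\varphi_y$ can be pinned to a specific $X'_t$ and the final count $prs\neq qrs$ closes the contradiction.

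Two smaller points. First, your claimed extension ``by iteration of the reduction step to any profile whose partial lcms form a totally ordered divisibility chain with $l_k$ on top'' adds nothing beyond the trivial branch: if $lcm(l_1,\dots,l_{k-1})\mid l_k$ the conjecture already holds, and iterating over the other cut points $j<k-1$ only constrains the $l_j$ among themselves, not their relation to $l_k$. Second, in the hard branch the combinatorial shape you extract (a prime whose maximal exponent is attained strictly below $l_k$) is correct but is precisely the configuration $(l_1,l_2,l_3)=(pqs,prs,qrs)$ that the paper can only exclude under the crossed-set and multiplicity-one hypotheses. So your proposal is a faithful reconstruction of the paper's partial verification rather than a proof; a complete proof of the conjecture would indeed require a genuinely new ingredient of the kind you gesture at in your final sentence.
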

Based on the obstructions on the profiles of finite indecomposable racks and crossed sets provided in Section \ref{3.}, we verify some cases of the Hayashi's conjecture on finite indecomposable racks.
\\\\\emph{Case 1}. The Hayashi's conjecture is trivially true for a finite indecomposable rack with profile $1^{m_0}\ell_1^{m_1}$. The examples of finite indecomposable rack and quandles with profile $1^{m_0}\ell_1^{m_1}$ are the connected quandles of cycle type (see \cite{12}) and the dihedral quandles on $\mathbb{Z}_n$ when $n$ is odd. For indecomposable dihedral quandle on $\mathbb{Z}_n$ we have $\varphi_x(y)=2x-y\pmod n$ and
\begin{center}
$\varphi_x =\prod\limits_{y=1}^{[\frac{n-1}{2}]}(x+y\;\;x-y)\pmod n$.
\end{center}
Therefore, the profile of an indecomposable dihedral quandle on $\mathbb{Z}_n$ is $1^{m_0}l_1^{m_1}$ with $l_1=2$.
\\\\\emph{Case 2}. The Hayashi's conjecture is true for a finite indecomposable rack with profile $1^{m_0}l_1^{m_1}l_2^{m_2}$, since by Proposition \ref{3.5.} there is no finite indecomposable rack $X$ with profile $1^{m_0}l_1^{m_1}l_2^{m_2}$ such that $l_1$ does not divide $l_2$.
\\\\\emph{Case 3}. The Hayashi's conjecture is true for a finite indecomposable crossed set with profile $1^{m_0}l_1l_2l_3$, since by Proposition \ref{3.5.} and Proposition \ref{3.15.} there is no finite crossed set with profile profile $1^{m_0}l_1l_2l_3$ such that $l_1$ or $l_2$ or both $l_1$ and $l_2$ do not divide $l_3$. One such case with smallest $l_1, l_2, l_3$ is when $(l_1, l_2, l_3)=(6, 10, 15)$, by which it follows that the Hayashi's conjecture is true for a finite indecomposable crossed set $X$ with $\varphi_x \in Inn(X)$ such that $supp(\varphi_x)\leq 31$.
\\\\\emph{Case 4}. The Hayashi's conjecture is true for a finite indecomposable rack $X$ such $X$ is a quandle and at least one of the equations $x \rhd y = y, x \rhd (y \rhd x) = y$ holds for all $x, y \in X$. Such racks are called braided. The finite indecomposable braided racks have been studied and classified in \cite{8}. Let $X$ be a finite indecomposable rack. Then the degree of $X$, written $deg(X):=ord(\varphi_x)$, is $1, 2, 3, 4$ or $6$ (see \cite{8} Proposition 6). When $deg(X)=ord(\varphi_x) = 1$ then $\varphi_x(y) = x\rhd y = y$ and therefore the profile of $X$ is $1^{m_0}$. When $deg(X) =ord(\varphi_x)=2$, the profile of $X$ is $1^{m_0}l_1^{m_1}$ with $l_1 = 2$. Similarly, when $deg(X) =ord(\varphi_x)=3$, the profile of $X$ is $1^{m_0}l_1^{m_1}$ with $l_1 = 3$. Next consider the case when $deg(X) =ord(\varphi_x)=4$. Then the profile of $X$ is either $1^{m_0}l_1^{m_1}$ with $l_1 = 4$ or $1^{m_0}l_1^{m_1}l_2^{m_2}$ with $l_1 = 2$ and $l_2 = 4$ since $lcm(2, 4)= ord(\varphi_x)=4$. Finally, consider the case when $deg(X) =ord(\varphi_x)=6$. Then the profile of $X$ is either $1^{m_0}l_1^{m_1}$ with $l_1 = 6$ or $1^{m_0}l_1^{m_1}l_2^{m_2}$ with $l_1 = 2$ and $l_2 = 6$ or $1^{m_0}l_1^{m_1}l_2^{m_2}$ with $l_1 = 3$ and $l_2 = 6$ since $ord(\varphi_x)=lcm(2, 6)=lcm(2, 6)=6$. Note that by \emph{Case 2} the profile of a finite indecomposable braided rack $X$ cannot be $1^{m_0} l_1^{m_1} l_2^{m_2} $ with  $l_1 = 2$ and $l_2 = 3$.
\\\\\emph{Case 4}. The Hayashi's conjecture is true for a finite indecomposable affine quandle \emph{Aff}$(G, \alpha)$. In order to see this first note that for an affine quandle \emph{Aff}$(G, \alpha)$ we have
\begin{center}
$\varphi_x(y)= x \rhd y = (1-\alpha)(x) + \alpha(y)$
\end{center}
Now if we take $x$ as the identity $0$ of $G$ we get $\varphi_0(y)= \alpha(y)$ for all $y\in G$. This implies that $\varphi_0= \alpha$, and therefore, the cycle structure of $\varphi_0$ is equal to the cycle structure of $\alpha \in Aut(G)$. The cycle structure of automorphisms of a finite group $G$ has been studied by many, notably in  \cite{2}, \cite{6}. In these studies a cycle $\sigma$ of $\alpha$ is called a \emph{regular cycle or orbit} if the length of $\sigma$ is equal to $ord(\alpha)$ (which is the least common multiple of the cycle lengths of $\alpha$). Therefore if $\alpha$ has a regular cycle $\sigma$ then all cycle lengths of $\alpha$ divide the largest cycle length which is $ord(\alpha)$. By \cite{2} (Corollary $2.10$) any nilpotent group has a regular cycle. Now since the affine quandle \emph{Aff}$(G, \alpha)$ is defined on an abelian group $G$ which is nilpotent, the automorphism $\alpha$ of $G$ has a regular cycle. Therefore the Hayashi's conjecture is true for a finite indecomposable affine quandle \emph{Aff}$(G, \alpha)$.
\\\\\emph{Case 5}. The Hayashi's conjecture is true for all finite indecomposable racks of size $p$ and $p^2$, where $p$ is a prime number. By \cite{3}, \cite{4} we know that a finite indecomposable rack of size $p$ or $p^2$ is either affine or of cyclic type with no fixed point. The profile of a rack of cyclic type with no fixed point is $l_1$.
\\\\\emph{Case 6}. The Hayashi's conjecture is true for all finite indecomposable quandles of size at most $47$. The list of all indecomposable quandles of size $n\leq 47$ is available in a GAP package called \emph{Rig} (see \cite{5}) with notation $SmallQuandle(n, q(n))$, where $q(n):=$ quandle number of size $n$. The profiles of these small quandles can be computed with the following function
\begin{center}
$Profile:=CycleLengths(Permutations(q)[1],[1..n])$.
\end{center}
For example the profile of $SmallQuandle(42, 7)$ is $1^2.2^2.3^4.6^4$. By inspection we also observed that the profile of a finite indecomposable affine quandle of prime size $p$ with $p\leq 47$ is $1^{m_0}l_1^{m_1}$, and the profile of a finite indecomposable affine quandles of size $p^2$ with $p^2\leq 47$ is either $1^{m_0}l_1^{m_1}$ or $1^{m_0}l_1^{m_1}l_2^{m_2}$.

\paragraph{\textbf{Acknowledgement.}} The author is grateful to Istvan Heckenberger for introducing the problem and useful discussion. This work was supported by German Academic Exchange Service (DAAD).


\begin{center}
\addcontentsline{toc}{section}{References}

\end{center}
Naqeeb ur Rehman, Allama Iqbal Open University Islamabad, Pakistan.\\
Email: naqeeb@aiou.edu.pk

\begin{thebibliography}{99}
\setlength{\itemsep}{0.85mm}

\bibitem{1} {N. Andruskiewitsch, M. Gra\~{n}a, From racks to pointed Hopf algebras, Adv. Math. 178(2), 177–243 (2003).}

\bibitem{2}{A. Bors, On finite groups where the order of every automorphism is a cycle length, arxiv 1412.8418., (2014).}

\bibitem{3}{P. Etingof, A. Soloviev, R. Guralnick, Indecomposable set-theoretical solutions to the quantum Yang-Baxter equation on a set with a prime number of elements, J. Algebra, 242(2), 709–719 (2001).}

\bibitem{4} {M. Gra\~{n}a, Indecomposable racks of order $p^2$, Beiträge Algebra Geom., 45(2), 665–676 (2004).}

\bibitem{5} {M. Gra\~{n}a, L. Vendramin Rig, A GAP package for racks and Nichols algebras, http://code.google. com/p/rig/.}

\bibitem{6}{S. Guest, P. Spiga, Finite primitive groups and regular orbits of group elements, Trans. Am. Math. Soc., 369, 997-1024 (2017).}
\bibitem{7} {C. Hayashi, Canonical forms for operation tables of finite connected quandles, Comm. Algebra, 41(9), 3340-3349 (2013).}

\bibitem{8} {I. Heckenberger, A. Lochmann, L. Vendramin, Braided racks, Hurwitz actions and Nichols algebras with many cubic relations, Transform. Groups 17(1), 157–194 (2012).}
\bibitem{9} {Heckenberger, I., Lochmann, A., Vendramin, L.: Nichols algebras with many cubic relations. Trans. Am. Math. Soc., 367 (9), 6315–6356 (2015).}
\bibitem{10}{P. Lopes, D. Roseman, On finite racks and quandles, Comm. Algebra., 34(1), 371–406 (2006).}

\bibitem{11}{L. Vendramin, On the classification of quandles of low order, J. Knot Theory Ramif., 21(9) (2012).}

\bibitem{12}{L. Vendramin, Doubly transitive groups and cyclic quandles, J. Math. Soc. Japan., 69(3), 1051-1057 (2017).}




\end{thebibliography}
\end{document}